\documentclass[reqno,12pt]{amsart}
\usepackage{amsmath,epsfig}
\usepackage{amssymb}



\usepackage{graphicx}
\usepackage{xcolor}
\usepackage{color}

\usepackage{ulem}
\newtheoremstyle{rem}{1.3ex}{1.3ex}{\rmfamily}{}
{\itshape\rmfamily}{}{1.5ex}{}

\newtheorem{theorem}{Theorem}[section]

\newtheorem{corollary}[theorem] {Corollary}

\theoremstyle{definition}

\newtheorem{remark}[theorem] {Remark}

\renewcommand{\section}{\secdef\sct\sect}
\newcommand{\sct}[2][default]{\refstepcounter{section}
\setcounter{equation}{0}
\vspace{0.5cm}
\centerline{ \large
\scshape \arabic{section}.\ #1}
\vspace{0.3cm}}
\newcommand{\sect}[1]{
\vspace{0.5cm}
\centerline{\large\scshape #1}
\vspace{0.3cm}}

\renewcommand{\subsection}{\secdef \subsct\sbsect}
\newcommand{\subsct}[2][default]{\refstepcounter{subsection}
\nopagebreak
\vspace{0.5\baselineskip}
{\flushleft\bf \arabic{section}.\arabic{subsection}~\bf #1  }
\nopagebreak}
\newcommand{\sbsect}[1]{\vspace{0.1cm}\noindent
{\bf #1}\vspace{0.1cm}}



\def\phi{\varphi }

\newcommand{\C}     {\mathbb{C}}

\newcommand{\R}     {\mathbb{R}}

\newcommand{\E}     {\mathbb{E}}

\newcommand{\V}     {\mathbb{V}}

\def\1{{\mathchoice {1\mskip-4mu\mathrm l}
                    {1\mskip-4mu\mathrm l}
                    {1\mskip-4.5mu\mathrm l} {1\mskip-5mu\mathrm l}}}


%

\setlength{\textheight}{25cm}
\setlength{\textwidth}{15cm}
\setlength{\topmargin}{0in}
\setlength{\headheight}{0.12in}
\setlength{\headsep}{.40in}
\setlength{\parindent}{1pc}
\setlength{\oddsidemargin}{-0.1in}
\setlength{\evensidemargin}{-0.1in}

\marginparwidth 48pt
\marginparsep 10pt
\oddsidemargin-5mm
\topmargin -18pt
\headheight 12pt
\headsep 25pt
\footskip 30pt
\textheight 650pt
\textwidth 170mm
\columnsep 10pt
\columnseprule 0pt

\sloppy
\parskip 0.8ex plus0.3ex minus0.2ex
\parindent1.0em

\begin{document}

\title[Moderate deviations for Wigner matrices]{\large
Moderate deviations for\\\vspace{2mm}the eigenvalue counting function\\\vspace{5mm}of Wigner matrices}

\author[Hanna D\"oring and Peter Eichelsbacher]{} 
\maketitle
\thispagestyle{empty}
\vspace{0.2cm}

\centerline{\sc Hanna D\"oring\footnote{Ruhr-Universit\"at Bochum, Fakult\"at f\"ur Mathematik,
NA 3/68, D-44780 Bochum, Germany, {\tt hanna.doering@rub.de}}, Peter Eichelsbacher\footnote{Ruhr-Universit\"at Bochum, Fakult\"at f\"ur Mathematik,
NA 3/66, D-44780 Bochum, Germany, {\tt peter.eichelsbacher@rub.de}
\\Both authors have been supported by Deutsche Forschungsgemeinschaft via SFB/TR 12. The first author was supported by the international
research training group 1339 of the DFG.}}


\vspace{2 cm}

\begin{quote}
{\small {\bf Abstract:} }
We establish a moderate deviation principle (MDP) for the number of eigenvalues of a Wigner matrix in an interval.
The proof relies on fine asymptotics of the variance of the eigenvalue counting function of GUE matrices due to Gustavsson.
The extension to certain families of Wigner matrices is based on the Tao and Vu Four Moment Theorem and applies
localization results by Erd\"os, Yau and Yin. Moreover we investigate families of covariance matrices as well.
\end{quote}

\bigskip\noindent
{\bf AMS 2000 Subject Classification:} Primary 60B20; Secondary 60F10, 15A18 

\medskip\noindent
{\bf Key words:} Large deviations, moderate deviations, Wigner random matrices, covariance matrices, Gaussian ensembles, Four Moment Theorem


\newpage
\setcounter{section}{0}

\section{Introduction}
Recently, in \cite{Dallaporta/Vu:2011} the Central Limit Theorem (CLT) for the eigenvalue counting function of Wigner matrices, that is the number
of eigenvalues falling in an interval, was established. This {\it universality result} relies on fine asymptotics of the variance of the eigenvalue counting function,
on the Fourth Moment Theorem due to Tao and Vu as well as on recent localization results due to Erd\"os, Yau and Yin. See also \cite{Dalla:2011}.
Our paper is concerned with the moderate deviation principle (MDP) of the eigenvalue counting function. We will start with the MDP
for Wigner matrices where the entries are Gaussian (the so-called Gaussian unitary ensemble (GUE)), first proven by the authors in \cite{DoeringEichelsbacher:2010}.  
Next we establish a MDP for individual eigenvalues
in the bulk of the semicircle law (which is an MDP corresponding to the Gaussian behaviour proved in \cite{Gustavsson:2005}). This MDP will be extended
to certain families of Wigner matrices by means of the Four Moment Theorem (see \cite{Tao/Vu:2009}, \cite{Tao/Vu:2010}). It seem to be the first application
of the Four Moment Theorem to be able to obtain not only universality of convergence in distribution but also to obtain deviations results on a logarithmic scale, universally.
Finally a strategy based on the MDP for individual eigenvalues in the bulk will be shown to imply the MDP for the eigenvalue counting function, 
universally for certain Wigner matrices. In the meantime, we successfully apply the Four Moment Theorem to obtain MDPs also at the edge
of the spectrum as well as for the determinant of certain Wigner matrices, see \cite{DoeringEichelsbacher:2012a}, \cite{DoeringEichelsbacher:2012b}.

Consider two independent families of i.i.d. random variables $(Z_{i,j})_{1 \leq i <j}$ (complex-valued) and $(Y_i)_{1 \leq i}$ (real-valued), zero mean, such that
$\E Z_{1,2}^2=0, \E|Z_{1,2}|^2=1$ and $\E Y_1^2=1$. Consider the (Hermitian) $n \times n$ matrix $M_n$ with entries $M_n^*(j,i) = M_n(i,j) = Z_{i,j} / \sqrt{n}
$ for $i <j $ and  $M_n^*(i,i) = M_n(i,i)=Y_i/ \sqrt{n}$. Such a matrix is called {\it Hermitian Wigner matrix}. An important example of Wigner matrices is the case
where the entries are Gaussian, giving rise to the so-called Gaussian Unitary Ensembles (GUE). GUE matrices will be denoted by $M_n'$. In this case, the joint
law of the eigenvalues is known, allowing a good description of their limiting behaviour both in the global and local regimes (see \cite{Zeitounibook}). In the
Gaussian case, the distribution of the matrix is invariant by the action of the group $SU(n)$. The eigenvalues of the matrix $M_n'$ are independent
of the eigenvectors which are Haar distributed. If  $(Z_{i,j})_{1 \leq i <j}$ are real-valued the {\it symmetric Wigner matrix} is defined analogously and
the case of Gaussian variables with $\E Y_1^2=2$ is of particular importance, since their law is invariant under the action of the orthogonal group $SO(n)$, known
as Gaussian Orthogonal Ensembles (GOE).

Denote by $\lambda_1, \ldots, \lambda_n$ the real eigenvalues of the normalised Hermitian (or symmetric) Wigner matrix $W_n = \frac{1}{\sqrt{n}} M_n$. 
The Wigner theorem states that the empirical measure
$$
L_n := \frac 1n \sum_{i=1}^n \delta_{\lambda_i}
$$
on the eigenvalues of $W_n$ converges weakly almost surely as $n \to \infty$ to the semicircle law
$$
d \varrho_{sc}(x) = \frac{1}{2 \pi} \sqrt{ 4 - x^2} \, 1_{[-2,2]}(x) \, dx,
$$ 
(see \cite[Theorem 2.1.21, Theorem 2.2.1]{Zeitounibook}). Consequently, for any interval $I \subset \R$,
$$
\frac{1}{n} N_I(W_n) := \frac 1n \sum_{i=1}^n 1_{\{\lambda_i \in I\}} \to \varrho_{sc}(I)
$$
almost surely as $n \to \infty$. At the fluctuation level, it is well known that for the GUE, $W_n' := \frac{1}{\sqrt{n}} M_n'$
satisfies a CLT (see \cite{Soshnikov:2000}): Let $I_n$ be an interval in $\R$. If $\V(N_{I_n}(W_n')) \to \infty$
as $n \to \infty$, then
$$
\frac{N_{I_n}(W_n') - \E [ N_{I_n}(W_n')]}{\sqrt{\V (N_{I_n}(W_n'))}} \to N(0,1)
$$
as $n \to \infty$ in distribution.

In \cite{Gustavsson:2005} the asymptotic behavior of the expectation and the variance of the counting function $N_{I_n}(W_n')$ for intervals $I_n=[y(n), \infty)$ with $y(n) = G^{-1}(k/n)$ (where $k=k(n)$ is such that $k/n \to a \in (0,1)$ -- \textit{strictly in the bulk}--, and $G$ denotes the distribution function 
of the semicircle law) was established:
\begin{equation} \label{asymp}
\E [ N_{I_n}(W_n')] = n -k(n)  + O \bigl( \frac{\log n}{n} \bigr) \,\, \text{and} \,\, \V (N_{I_n}(W_n')) = \bigl( \frac{1}{2 \pi^2} + o(1) \bigr) \, \log n.
\end{equation}
The proof applied strong asymptotics for orthogonal polynomials with respect to exponential weights, see \cite{Deift/Thomas:1999}.
In particular the CLT holds for $N_I(W_n')$ if $I=[y, \infty)$ with $y \in (-2,2)$, and moreover in this case one obtains
$$
\frac{N_{I}(W_n') - n \varrho_{sc}(I)}{\sqrt{\frac{1}{2 \pi^2} \log n}} \to N(0,1)
$$
as $n \to \infty$ (called the CLT with numerics). These conclusions were extended to non-Gaussian Wigner matrices in \cite{Dallaporta/Vu:2011}.

Certain deviations results and concentration properties for Wigner matrices were considered. Our aim is to establish certain moderate
deviation principles. Recall that a sequence of laws $(P_n)_{n \geq 0}$ on a Polish space $\Sigma$ satisfies a large deviation principle (LDP)
with good rate function $I : \Sigma \to \R_+$ and speed $s_n$ going to infinity with $n$ if and only if the level sets $\{x: I(x) \leq M\}$, $0 \leq M < \infty$,
of $I$ are compact and for all closed sets $F$
$$
\limsup_{n \to \infty} s_n^{-1} \log P_n(F) \leq - \inf_{x \in F} I(x)
$$
whereas for all open sets $O$
$$
\liminf_{n \to \infty} s_n^{-1} \log P_n(O) \geq - \inf_{x \in O} I(x).
$$
We say that a sequence of random variables satisfies the LDP when the sequence of measures induced by these variables satisfies the LDP. Formally
a moderate deviation principle is nothing else but the LDP. However, we speak about a moderate deviation principle (MDP) for a sequence of random variables,
whenever the scaling of the corresponding random variables is between that of an ordinary Law of Large Numbers (LLN) and that of a CLT.


Large deviation results for the empirical measures of Wigner matrices are still only known for the Gaussian ensembles since their
proof is based on the explicit joint law of the eigenvalues, see \cite{BenArous/Guionnet:1997} and \cite{Zeitounibook}. A moderate deviation
principle for the empirical measure of the GUE or GOE is also known, see  \cite{Dembo/Guionnet/Zeitouni:2003}. This moderate deviations result
does not have yet a fully universal version for Wigner matrices. It has been generalised to Gaussian divisible matrices with a deterministic self-adjoint matrix added
with converging empirical measure \cite{Dembo/Guionnet/Zeitouni:2003} and to Bernoulli matrices \cite{DoeringEichelsbacher:2009}.

Our first result is a MDP for the number of eigenvalues of a GUE matrix in an interval. It is a little modification 
of \cite[Theorem 5.2]{DoeringEichelsbacher:2010}. In the following, for two sequences of real numbers $(a_n)_n$ and $(b_n)_n$ we denote
by $a_n \ll b_n$ the convergence $\lim_{n \to \infty} a_n/b_n = 0$.

\begin{theorem} \label{GUE-MDP1} 
Let $M_n'$ be a GUE matrix and $W_n' := \frac{1}{\sqrt n} M_n'$. Let $I_n$ be an interval in $\R$. If $\V(N_{I_n}(W_n')) \to \infty$
for $n \to \infty$, then for any sequence $(a_n)_n$ of real numbers such that
\begin{equation} \label{regime}
1 \ll a_n \ll \sqrt{\V (N_{I_n}(W_n'))}
\end{equation}
the sequence $(Z_n)_n$ with
$$
Z_n = \frac{N_{I_n}(W_n') - \E [ N_{I_n}(W_n')]}{a_n \, \sqrt{\V (N_{I_n}(W_n'))}}
$$
satisfies a MDP with speed $a_n^2$ and rate function $I(x)=\frac{x^2}{2}$.
\end{theorem}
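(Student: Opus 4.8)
The plan is to verify the hypotheses of the Gärtner–Ellis theorem for the normalised logarithmic moment generating function of $(Z_n)_n$ and to read off the rate function as its Legendre transform. Write $m_n:=\E[N_{I_n}(W_n')]$, $v_n:=\V(N_{I_n}(W_n'))$ and $\sigma_n:=\sqrt{v_n}$, so that $\sigma_n\to\infty$. The starting point is that the eigenvalues of $W_n'$ form a determinantal point process governed by the $n$-th Hermite (Christoffel--Darboux) kernel; since this kernel defines an orthogonal projection of rank $n$ on $L^2(\R)$, its compression $K_{I_n}$ to the interval $I_n$ is a self-adjoint operator of rank at most $n$ whose eigenvalues $\lambda_{n,1},\dots,\lambda_{n,n}$ all lie in $[0,1]$. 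By the standard generating-function identity for determinantal processes (equivalently, by the representation of the counting statistic as a sum of independent Bernoulli variables, cf.\ \cite{Soshnikov:2000}),
\[
\E\bigl[e^{\theta N_{I_n}(W_n')}\bigr]=\det\bigl(\id+(e^\theta-1)K_{I_n}\bigr)=\prod_{k=1}^n\bigl(1+\lambda_{n,k}(e^\theta-1)\bigr),\qquad\theta\in\R,
\]
whence $m_n=\sum_{k}\lambda_{n,k}$ and $v_n=\sum_{k}\lambda_{n,k}(1-\lambda_{n,k})$.

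Fix $t\in\R$ and put $\theta_n:=t\,a_n/\sigma_n$. By \eqref{regime} we have $\theta_n\to0$ and $a_n\to\infty$, while $v_n\theta_n^2=t^2a_n^2$. Set $f_\lambda(\theta):=\log\bigl(1+\lambda(e^\theta-1)\bigr)$, so that $f_\lambda(0)=0$, $f_\lambda'(0)=\lambda$ and $f_\lambda''(0)=\lambda(1-\lambda)$; a direct computation gives $f_\lambda''(\theta)=\lambda(1-\lambda)e^\theta\bigl(1+\lambda(e^\theta-1)\bigr)^{-2}$ and then the uniform bound $|f_\lambda'''(\theta)|\le C\,\lambda(1-\lambda)$ for all $|\theta|\le1$ and $\lambda\in[0,1]$, with $C$ an absolute constant. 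A third-order Taylor expansion, summed over $k$, therefore yields, for $n$ so large that $|\theta_n|\le1$,
\[
\sum_{k=1}^n f_{\lambda_{n,k}}(\theta_n)=m_n\theta_n+\tfrac12 v_n\theta_n^2+O\bigl(|\theta_n|^3 v_n\bigr),
\]
the point being that the remainder aggregates against $\sum_k\lambda_{n,k}(1-\lambda_{n,k})=v_n$ rather than against $n$. Consequently
\[
\frac{1}{a_n^2}\log\E\bigl[e^{a_n^2 t Z_n}\bigr]=\frac{1}{a_n^2}\Bigl(-m_n\theta_n+\sum_{k=1}^n f_{\lambda_{n,k}}(\theta_n)\Bigr)=\frac{v_n\theta_n^2}{2a_n^2}+O\!\Bigl(\frac{|\theta_n|^3 v_n}{a_n^2}\Bigr)=\frac{t^2}{2}+O\bigl(|\theta_n|\,t^2\bigr)\;\xrightarrow[n\to\infty]{}\;\frac{t^2}{2}.
\]

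Thus $\Lambda(t):=\lim_{n}a_n^{-2}\log\E[e^{a_n^2 t Z_n}]=t^2/2$ exists, is finite and differentiable on all of $\R$, so in particular $0$ is interior to its effective domain and $\Lambda$ is essentially smooth and lower semicontinuous. The Gärtner--Ellis theorem then shows that $(Z_n)_n$ satisfies an LDP with speed $a_n^2$ and good rate function $\Lambda^*(x)=\sup_{t\in\R}\bigl(tx-\tfrac{t^2}{2}\bigr)=\tfrac{x^2}{2}$; since $1\ll a_n\ll\sigma_n$, the normalisation of $Z_n$ lies between that of the Law of Large Numbers and that of the CLT, so this is exactly the asserted MDP. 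The only step requiring genuine care is the uniform third-derivative estimate for $f_\lambda$, which is what makes the Taylor remainder controllable by $v_n$ and hence negligible once $a_n\ll\sigma_n$; everything else is bookkeeping together with the invocation of Gärtner--Ellis. (This reproves, under slightly weakened assumptions on $I_n$, \cite[Theorem 5.2]{DoeringEichelsbacher:2010}.)
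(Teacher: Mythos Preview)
Your proof is correct and follows essentially the same route as the paper's: both exploit the representation of $N_{I_n}(W_n')$ as a sum of independent Bernoulli variables with parameters equal to the eigenvalues of the restricted Hermite kernel, carry out a Taylor expansion of the log moment generating function (your control of the remainder via the uniform bound $|f_\lambda'''|\le C\lambda(1-\lambda)$ is the cleaner version of the paper's ``Taylor for $\log(1+x)$'' step), and then invoke G\"artner--Ellis. The paper first states the argument for a general determinantal point process with good kernel (Theorem~\ref{mdpDDP}) and only afterwards specialises to GUE, but the substantive content is identical.
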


\begin{remark} \label{renew}
Let $I=[y, \infty)$ with $y \in (-2,2)$. An easy consequence of Theorem \ref{GUE-MDP1} is that the sequence $(\hat{Z}_n)_n$ with
$$
\hat{Z}_n = \frac{N_{I}(W_n') - n \varrho_{sc}(I)}{a_n \, \sqrt{\frac{1}{2 \pi^2} \log n}}
$$
satisfies the MDP with the same speed, the same rate function, and in the same regime \eqref{regime} (called the MDP with numerics).
\end{remark}

In our paper we extend these conclusions to certain non-Gaussian Hermitian Wigner matrices. 

\noindent
{\bf Tail-condition $(T)$:}
Say that $M_n$ satisfies the tail-condition $(T)$ if the real part $\eta$ and the imaginary part $\overline{\eta}$ of $M_n$ are independent and have 
a so-called stretched exponential decay: there
are two constants $C$ and $C'$ such that
$$
P \bigl( |\eta| \geq t^C \bigr) \leq e^{-t} \,\, \text{and} \,\, P \bigl( |\overline{\eta}| \geq t^C \bigr) \leq e^{-t}
$$
for all $t \geq C'$.
\vspace{0.5cm}

We say that two complex random variables $\eta_1$ and $\eta_2$ {\it match to order $k$} if
$$
\E \bigl[ \text{Re}(\eta_1)^m \, \text{Im}(\eta_1)^l \bigr] = \E \bigl[ \text{Re}(\eta_2)^m \, \text{Im}(\eta_2)^l \bigr]
$$
for all $m,l \geq 0$ such that $m+l \leq k$.

The following theorem is the main result of our paper:

\begin{theorem} \label{main}
Let $M_n$ be a Hermitian Wigner matrix whose entries satisfy tail-condition $(T)$ and match the corresponding entries of GUE up to order 4.
Set $W_n := \frac{1}{\sqrt{n}} M_n$. Then, for any $y \in (-2,2)$ and $I(y) = [y, \infty)$, with $Y_n := N_{I(y)}(W_n)$, for any
sequence $(a_n)_n$ of real numbers such that
\begin{equation} \label{regime2}
1 \ll a_n \ll \sqrt{\V (Y_n)}
\end{equation}
the sequence $(Z_n)_n$ with
$$
Z_n = \frac{Y_n - \E [Y_n]}{a_n \, \sqrt{\V (Y_n)}}
$$
satisfies a MDP with speed $a_n^2$ and rate function $I(x)=\frac{x^2}{2}$.
Moreover the sequence 
$$
\hat{Z}_n = \frac{Y_n - n \varrho_{sc}(I)}{a_n \, \sqrt{\frac{1}{2 \pi^2} \log n}}
$$
satisfies the MDP with the same speed, the same rate function, and in the same regime \eqref{regime2} (called the MDP with numerics).
\end{theorem}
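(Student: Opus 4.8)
The plan is to reduce a large deviation of the counting function to one of a single eigenvalue lying deep in the bulk, to transfer the Gaussian statement to $W_n$ by the Four Moment Theorem, and then to undo the reduction; the passage from $\hat Z_n$ to $Z_n$ is afterwards cosmetic. Throughout, the eigenvalues are ordered increasingly, $\lambda_1(W_n)\le\cdots\le\lambda_n(W_n)$, and $G$ denotes the semicircle distribution function, so that $G'(y)=\tfrac1{2\pi}\sqrt{4-y^2}$ and $\varrho_{sc}([y,\infty))=1-G(y)$. By the variance universality of Dallaporta and Vu, $\V(Y_n)=\bigl(\tfrac1{2\pi^2}+o(1)\bigr)\log n$, so \eqref{regime2} amounts to $1\ll a_n\ll\sqrt{\log n}$. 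The starting point is the trivial identity
$$
\{N_{[y,\infty)}(W_n)\ge j\}=\{\lambda_{n-j+1}(W_n)\ge y\},\qquad j=1,\dots,n .
$$
In this regime one shifts $j$ around its typical value $n\varrho_{sc}(I)$ by only $O(a_n\sqrt{\log n})=o(n)$, so the eigenvalue indices $k=k(n)$ that occur all satisfy $k/n\to G(y)\in(0,1)$; write $\gamma_k$ for the classical location, $G(\gamma_k)=k/n$, and $s_n(k):=\bigl(\tfrac{2\log n}{(4-\gamma_k^2)\,n^2}\bigr)^{1/2}$ for Gustavsson's scale of $\lambda_k$. A one-line Taylor expansion of $G$ at $y$ gives the algebraic identity at the heart of the matter: if $k/n=G(y)-(1+o(1))\,t\,\tfrac{a_n}{n}\sqrt{\tfrac{\log n}{2\pi^2}}$ for some $t\in\R$, then $\tfrac{y-\gamma_k}{a_n\,s_n(k)}\to t$. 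This is precisely the compatibility of Gustavsson's two constants $\tfrac1{2\pi^2}$ (for $\V(N_I)$) and $\tfrac{2}{4-\gamma^2}$ (for $\V(\lambda_k)$), and it is what makes both deviation statements carry the \emph{same} rate function $x^2/2$.

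Fix $t>0$ and put $k_t:=nG(y)-ta_n\sqrt{\tfrac{\log n}{2\pi^2}}+O(1)$, so that $k_t/n\to G(y)$, $\gamma_{k_t}<y$ and $n(y-\gamma_{k_t})\asymp a_n\sqrt{\log n}$. Applying Remark \ref{renew} to the threshold $y$ and reading it through the identity above, the GUE eigenvalue satisfies $\tfrac1{a_n^2}\log P\bigl(\lambda_{k_t}(W_n')\ge y\bigr)\to-\tfrac{t^2}{2}$; this is the MDP for individual GUE eigenvalues, which one may equally extract from Gustavsson's refined determinantal asymptotics. The event $\{\lambda_{k_t}(W_n)\ge y\}$ involves a \emph{single} eigenvalue and hence falls within reach of the Four Moment Theorem of Tao and Vu: approximating $\mathbf 1_{[y,\infty)}$ by smooth functions on a scale that diverges but stays below $a_n\sqrt{\log n}$ mean spacings, and using the rigidity estimates of Erd\"os--Yau--Yin (which confine $\lambda_{k_t}$ within $n^{-1+\eps}$ of $\gamma_{k_t}$ with probability $1-O(n^{-D})$ for every $D$, for $W_n$ and for $W_n'$ alike), one obtains — using that $M_n$ satisfies $(T)$ and matches GUE to order $4$ — that $P(\lambda_{k_t}(W_n)\ge y)$ and $P(\lambda_{k_t}(W_n')\ge y)$ differ by at most $n^{-c}$ for some fixed $c>0$. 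Here, and essentially only here, the moderate regime enters: since $a_n\ll\sqrt{\log n}$, the GUE probability equals $\exp\bigl(-(1+o(1))\tfrac{t^2}{2}a_n^2\bigr)=n^{-o(1)}$, which is far larger than $n^{-c}$; therefore $\tfrac1{a_n^2}\log P(\lambda_{k_t}(W_n)\ge y)\to-\tfrac{t^2}{2}$ as well. The same reasoning applied to the complementary event (and to indices with $\gamma_k>y$) gives the matching statement for the opposite tail.

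Running the identity in reverse for $W_n$ gives $P(\hat Z_n\ge t)=P\bigl(Y_n\ge\lceil n\varrho_{sc}(I)+ta_n\sqrt{\tfrac{\log n}{2\pi^2}}\rceil\bigr)=P(\lambda_{k_t}(W_n)\ge y)$, so by the previous step $\tfrac1{a_n^2}\log P(\hat Z_n\ge t)\to-\tfrac{t^2}{2}$ and, symmetrically, $\tfrac1{a_n^2}\log P(\hat Z_n\le-t)\to-\tfrac{t^2}{2}$ for every $t>0$. Since $x\mapsto x^2/2$ is convex, vanishes only at $0$, and has compact level sets, these one-sided estimates upgrade in the standard way to the full MDP for $(\hat Z_n)_n$ with speed $a_n^2$ and rate function $x^2/2$ (the upper bound over a closed set $F$ reducing to the tails at $\dist(0,F)$, the lower bound over an open set $O$ to the tails at interior points of $O$). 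Finally, recalling $\V(Y_n)=(\tfrac1{2\pi^2}+o(1))\log n$ and, by the same universality (Dallaporta and Vu, extending \eqref{asymp}), $\E[Y_n]=n\varrho_{sc}(I)+o(\sqrt{\log n})$, we obtain
$$
Z_n=\frac{\sqrt{\tfrac1{2\pi^2}\log n}}{\sqrt{\V(Y_n)}}\,\hat Z_n+\frac{n\varrho_{sc}(I)-\E[Y_n]}{a_n\sqrt{\V(Y_n)}}=(1+o(1))\,\hat Z_n+o(1)
$$
with deterministic corrections; hence $(Z_n)_n$ and $(\hat Z_n)_n$ are exponentially equivalent at speed $a_n^2$, and the MDP transfers verbatim, which is the assertion.

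The main obstacle is the transfer step: realizing the sharp event $\{\lambda_{k_t}(W_n)\ge y\}$ as a limit of smooth spectral statistics to which the Four Moment Theorem applies with a \emph{polynomially} small error — this is where the Erd\"os--Yau--Yin rigidity is indispensable, both for bounding the smoothing error and for discarding the indices that do not matter — together with the simple but decisive remark that a polynomial error is invisible against the scale $\exp(-\Theta(a_n^2))=n^{-o(1)}$ precisely because $a_n\ll\sqrt{\log n}$. Everything else is bookkeeping with Gustavsson's constants and routine large-deviation reductions.
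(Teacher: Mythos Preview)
Your proposal is correct and follows essentially the same route as the paper: convert the counting-function tail into a single-eigenvalue tail via \eqref{relation}, read off the GUE single-eigenvalue MDP from Theorem~\ref{GUE-MDP1} by the Taylor computation you sketch (this is the content of Theorem~\ref{MDP-GUE-Gust}), transfer it to $W_n$ by the Four Moment Theorem exploiting that the $n^{-c_0}$ error is negligible against $\exp(-\Theta(a_n^2))=n^{-o(1)}$ (this is Theorem~\ref{uni1}), and finally pass from $\hat Z_n$ to $Z_n$ using the Dallaporta--Vu asymptotics for $\E[Y_n]$ and $\V(Y_n)$. One small correction of emphasis: in the paper's version of the transfer step, rigidity is \emph{not} used --- the comparison $|P(\lambda_{k}(W_n)\in I)-P(\lambda_{k}(W_n')\in I_\pm)|\le n^{-c_0}$ comes straight from the Four Moment Theorem with bump functions at the fixed scale $n^{-c_0/10}$ (inequality \eqref{inequ}), and the Erd\H{o}s--Yau--Yin localization enters only indirectly, through Dallaporta--Vu's identification of the mean and variance needed for the exponential equivalence of $Z_n$ and $\hat Z_n$.
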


Before we will prove the MDP for the GUE, we describe the organisation of the next sections. In a first step, we will apply
Theorem \ref{GUE-MDP1} to obtain a MDP for eigenvalues in the bulk of the semicircle law. Next we extend this result to certain families
of Hermitian Wigner matrices satisfying tail-condition $(T)$ by means of the Four Moment Theorem due to Tao and Vu. This is the content of Section 2. In Section 3, we show
the MDP with numerics for the counting function of Wigner matrices. Moreover we apply recent results of Erd\"os, Yau and Yin \cite{Erdoes/Yau/Yin:2010} 
on the localization of eigenvalues
and of Dallaporta and Vu \cite{Dallaporta/Vu:2011} in order to prove the MDP without numerics. Section 4 is devoted to discuss the case of symmetric real Wigner
matrices as well as the symplectic Gaussian ensemble applying interlacing formulas due to Forrester and Rains, \cite{Forrester/Rains:2001}. Finally, in Section
5 we present results for covariance matrices. We prove a universal MDP with numerics for the counting eigenvalue
function of covariance matrices.

In \cite{DoeringEichelsbacher:2010} we proved a MDP for certain determinantal point processes (DPP), including GUE.
Theorem \ref{GUE-MDP1} follows immediately from an improvement of Theorem 5.2 in \cite{DoeringEichelsbacher:2010}, which
can be easily observed applying the proof of \cite[Theorem 4.2.25]{Zeitounibook}. 
Let $\Lambda$ be a locally compact Polish space, equipped with a positive Radon measure $\mu$ on its
Borel $\sigma$-algebra. Let ${\mathcal M}_+(\Lambda)$ denote the set of positive $\sigma$-finite Radon measures on $\Lambda$.
A point process is a random, integer-valued $\chi \in {\mathcal M}_+(\Lambda)$, and it is simple if $P( \exists x \in \Lambda: \chi(\{x\}) >1)=0$.
Let $\chi$ be simple.
A locally integrable function $\varrho : \Lambda^k \to [0, \infty)$ is called a joint intensity (correlation), if for
any mutually disjoint family of subsets $D_1, \ldots, D_k$ of $\Lambda$
$$
\E \bigl( \prod_{i=1}^k \chi(D_i) \bigr) = \int_{\prod_{i=1}^k D_i} \varrho_k(x_1, \ldots, x_k) d\mu(x_1) \cdots d \mu(x_k),
$$
where $\E$ denotes the expectation with respect to the law of the point configurations of $\chi$.
A simple point process $\chi$ is said to be a {\it determinantal point process} with kernel $K$ if its joint intensities $\varrho_k$
exist and are given by
\begin{equation} \label{DPP}
\varrho_k(x_1, \ldots, x_k) = \det \bigl(  K(x_i, x_j) \bigr)_{i,j=1, \ldots,k}.
\end{equation}
An integral operator ${\mathcal K}: L^2(\mu) \to L^2(\mu)$ with kernel $K$ given by
$$
{\mathcal K}(f)(x) = \int K(x,y) f(y) \, d \mu(y), f \in L^2(\mu),
$$
is {\it admissible} with admissible kernel $K$ if ${\mathcal K}$ is self-adjoint, nonnegative and locally trace-class
(for details see \cite[4.2.12]{Zeitounibook}). A standard result is, that an integral compact operator
${\mathcal K}$ with admissible kernel $K$ possesses the decomposition
$
{\mathcal K} f(x) = \sum_{k=1}^n \eta_k \phi_k(x) \langle \phi_k, f \rangle_{L^2(\mu)},
$
where the functions $\phi_k$ are orthonormal in $L^2(\mu)$, $n$ is either finite or infinite, and $\eta_k >0$
for all $k$, leading to
\begin{equation} \label{kernelrep}
K(x,y) = \sum_{k=1}^n \eta_k \phi_k(x) \phi_k^*(y),
\end{equation}
an equality in $L^2(\mu \times \mu)$.
Moreover, an admissible integral operator ${\mathcal K}$ with kernel $K$ is called {\it good} with good kernel $K$ if the $\eta_k$ in \eqref{kernelrep}
satisfy $\eta_k \in (0,1]$. If the kernel $K$ of a determinantal point process is (locally) admissible, then it must in fact be good, see
\cite[4.2.21]{Zeitounibook}.

The following example is the main motivation for discussing determinantal point processes in this paper.
Let $(\lambda_1^n, \ldots, \lambda_n^n)$ be the eigenvalues of the GUE (Gaussian unitary ensemble) of dimension $n$ and denote
by $\chi_n$ the point process $\chi_n(D) = \sum_{i=1}^n 1_{\{ \lambda_i^n \in D\}}$. Then $\chi_n$ is a determinantal point process with admissible, good
kernel $K^{(n)}(x,y)= \sum_{k=0}^{n-1} \Psi_k(x) \Psi_k(y)$, where the functions $\Psi_k$ are the oscillator wave-functions, that is
$\Psi_k(x) := \frac{e^{-x^2/4} H_k(x)}{\sqrt{\sqrt{2 \pi} k!}}$, where $H_k(x):= (-1)^k e^{x^2/2} \frac{d^k}{dx^k} e^{-x^2/2}$ is the $k$-th
Hermite polynomial; see \cite[Def. 3.2.1, Ex. 4.2.15]{Zeitounibook}.

We will apply the following representation due to \cite[Theorem 7]{HoKPV06}: Suppose $\chi$ is a determinantal process with good kernel $K$ of the form
\eqref{kernelrep}, with $\sum_k \eta_k < \infty$. Let $(I_k)_{k=1}^n$ be independent Bernoulli variables with $P(I_k=1) = \eta_k$. Set
$
K_I(x,y) = \sum_{k=1}^n I_k \, \phi_k(x) \phi_k^*(y),
$
and let $\chi_I$ denote the determinantal point process with random kernel $K_I$. Then $\chi$ and $\chi_I$ have the same distribution, interpreted as stating that the mixture of determinental processes $\chi_I$ has the same distribution as $\chi$.
In the following let $K$ be a good kernel and for $D \subset \Lambda$ we write $K_D(x,y)= 1_D(x) K(x,y) 1_D(y)$. Let $D$ be such that
$K_D$ is trace-class, with eigenvalues $\eta_k$, $k \geq 1$. Then $\chi(D)$ has the same distribution as $\sum_k \xi_k$
where $\xi_k$ are independent Bernoulli random variables with $P(\xi_k=1)= \eta_k$ and $P(\xi_k =0) = 1 - \eta_k$.

\begin{theorem} \label{mdpDDP}
Consider a sequence $(\chi_n)_n$ of determinantal point processes on $\Lambda$ with good kernels $K_n$. Let $D_n$
be a sequence of measurable subsets of $\Lambda$ such that $(K_n)_{D_n}$ is trace-class. Assume that $(a_n)_n$ is a sequence of real numbers such that
$$
1 \ll a_n \ll \bigl( \sum_{k=1}^n \eta_k^n(1- \eta_k^n) \bigr)^{1/2},
$$
where $\eta_k^n$ are the eigenvalues of $K_n$.
Then $(Z_n)_n$ with
$$
Z_n := \frac{1}{a_n} \frac{\chi_n(D_n) - \E (\chi_n(D_n))}{\sqrt{\V (\chi_n(D_n))}}
$$
satisfies a moderate deviation principle with speed $a_n^2$ and rate function $I(x) = \frac{x^2}{2}$.
\end{theorem}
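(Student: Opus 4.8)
The plan is to combine the mixture representation recalled just before the statement with the G\"artner--Ellis theorem, in the spirit of the proof of \cite[Theorem 4.2.25]{Zeitounibook}. By that representation, $\chi_n(D_n)$ has the same law as the sum $S_n:=\sum_k\xi_k^n$ of \emph{independent} Bernoulli variables $\xi_k^n$ with $P(\xi_k^n=1)=\eta_k^n$, the $\eta_k^n$ being the eigenvalues of $(K_n)_{D_n}$. Trace-class of $(K_n)_{D_n}$ gives $\sum_k\eta_k^n<\infty$, so $S_n$ is a.s.\ a finite sum, with $\E S_n=\sum_k\eta_k^n=\E(\chi_n(D_n))$ and $v_n:=\sum_k\eta_k^n(1-\eta_k^n)=\V(\chi_n(D_n))$. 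Hence $Z_n$ has the law of $(S_n-\E S_n)/(a_n\sqrt{v_n})$, and it suffices to prove a MDP at speed $a_n^2$ for this normalised sum of independent Bernoulli variables. Note that $1\ll a_n$ forces $a_n^2\to\infty$ (so $a_n^2$ is a genuine large-deviation speed), while the upper bound $a_n\ll\sqrt{v_n}$ will be the decisive ingredient below.

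Next I would compute the normalised logarithmic moment generating function $\Lambda_n(\lambda):=a_n^{-2}\log\E[\exp(\lambda a_n^2 Z_n)]$. Setting $t_n:=\lambda a_n/\sqrt{v_n}$, which tends to $0$ for each fixed $\lambda\in\R$, independence yields
$$
\Lambda_n(\lambda)=\frac{1}{a_n^2}\sum_k g_{\eta_k^n}(t_n),\qquad g_\eta(t):=\log\bigl(1-\eta+\eta e^{t}\bigr)-t\eta .
$$
A third-order Taylor expansion at $t=0$ gives $g_\eta(0)=g_\eta'(0)=0$, $g_\eta''(0)=\eta(1-\eta)$, and a direct computation shows $|g_\eta'''(t)|\le C\,\eta(1-\eta)$ for all $\eta\in[0,1]$ and $|t|\le1$ with an absolute constant $C$; hence
$$
g_\eta(t)=\tfrac12\,\eta(1-\eta)\,t^2+R_\eta(t),\qquad |R_\eta(t)|\le C\,\eta(1-\eta)\,|t|^3\quad(|t|\le1).
$$
Summing over $k$, the quadratic terms contribute $\tfrac12 t_n^2\sum_k\eta_k^n(1-\eta_k^n)=\tfrac12\lambda^2 a_n^2$, and the remainders are bounded by $C\,|t_n|^3 v_n=O(|\lambda|^3 a_n^3/\sqrt{v_n})$. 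Dividing by $a_n^2$,
$$
\Lambda_n(\lambda)=\frac{\lambda^2}{2}+O\!\Bigl(|\lambda|^3\,\frac{a_n}{\sqrt{v_n}}\Bigr)\;\xrightarrow[n\to\infty]{}\;\frac{\lambda^2}{2}=:\Lambda(\lambda)
$$
for every $\lambda\in\R$, precisely because $a_n/\sqrt{v_n}\to0$.

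Finally I would invoke the G\"artner--Ellis theorem. The limit $\Lambda(\lambda)=\lambda^2/2$ is finite, lower semicontinuous and differentiable on all of $\R$; in particular $0$ lies in the interior of its effective domain and $\Lambda$ is essentially smooth, while its finiteness on all of $\R$ also yields exponential tightness of $(Z_n)_n$, so the upper bound extends from compacts to all closed sets. Thus the theorem gives the LDP at speed $a_n^2$ with good rate function $\Lambda^*(x)=\sup_{\lambda\in\R}\{\lambda x-\lambda^2/2\}=x^2/2$. Transporting the statement back from $S_n$ to $\chi_n(D_n)$ through the distributional identity of the first step yields the asserted moderate deviation principle.

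The main obstacle is the cumulant estimate of the second step: one needs a remainder bound for $g_\eta$ that is \emph{uniform in} $\eta\in[0,1]$ and, crucially, carries a factor $\eta(1-\eta)$, so that summation against the eigenvalues produces $O(v_n|t_n|^3)$ rather than merely $O(N_n|t_n|^3)$, where $N_n$ is the (possibly infinite) rank of $(K_n)_{D_n}$; only this refined bound, combined with the regime $a_n\ll\sqrt{v_n}$, makes the cumulant error vanish after rescaling by $a_n^{-2}$. One should also check that the moment generating function is finite and that the interchange of sum and expectation is legitimate, which is immediate for $|t_n|\le1$ since then $1-\eta+\eta e^{t_n}\ge e^{t_n}>0$ uniformly in $\eta$ and $\sum_k\eta_k^n<\infty$.
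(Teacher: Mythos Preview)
Your proof is correct and follows essentially the same route as the paper: represent $\chi_n(D_n)$ as a sum of independent Bernoulli variables, expand the logarithmic moment generating function to second order, and apply G\"artner--Ellis. The paper's proof is terser about the Taylor step, simply invoking ``Taylor for $\log(1+x)$'' to obtain the $\theta^2/2 + o(a_n/S_n)$ limit, whereas you spell out explicitly why the third-order remainder carries the crucial factor $\eta(1-\eta)$ (via $g_\eta''(t)=\eta(1-\eta)e^t/(1-\eta+\eta e^t)^2$), which is what makes the error sum to $O(v_n|t_n|^3)$ rather than something involving the rank; this extra care is welcome and does not deviate from the paper's strategy.
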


\begin{remark} In \cite{Merlevede/Peligrad:2009}, functional moderate deviations for triangular arrays of certain independent, not
identically distributed random variables are considered. Our result, Theorem \ref{mdpDDP}, seem to follow from Proposition 1.9 in \cite{Merlevede/Peligrad:2009}. Anyhow we prefer to present a direct proof.
\end{remark}

\begin{proof}[Proof of Theorem \ref{mdpDDP}]
We adapt the proof of \cite[Theorem 4.2.25]{Zeitounibook}. We write $K_n$ for the kernel $(K_n)_{D_n}$ and let $S_n := \sqrt{\V (\chi_n(D_n))}$.
$\chi_n(D_n)$ has the same distribution as the sum of independent Bernoulli variables $\xi_k^n$ whose parameters $\eta_k^n$ are the eigenvalues of $K_n$.
We obtain $S_n^2= \sum_k \eta_k^n(1- \eta_k^n)$ and since $K_n$ is trace-class we can write, for any $\theta$ real
\begin{eqnarray*}
\log \E \bigl[e^{\theta a_n^2 \, Z_n}\bigr] & = & \sum_k \log \E \biggl[ \exp \bigl( \frac{\theta a_n^2 (\xi_k^n-\eta_k^n)}{a_n S_n} \bigr) \biggr]\\
& = & -\frac{ \theta a_n^2 \sum_k \eta_k^n}{a_n S_n} + \sum_k \log \biggl( 1 + \eta_k^n \bigl( e^{a_n^2 \theta/ (a_n S_n)} -1\bigr) \biggr).
\end{eqnarray*}
For any real $\theta$ and $n$ large enough such that $\eta_k^n(e^{\theta a_n/S_n} -1) \in [0,1]$ we apply Taylor for $\log (1+x)$ and obtain
$$
\frac{1}{a_n^2} \log \E \bigl[e^{\theta a_n^2 Z_n} \bigr] =  \frac{ \theta^2 a_n^2 \sum_k \eta_k^n(1-\eta_k^n)}{2 a_n^2 S_n^2} + o 
\biggl(\frac{a_n^3 \, \sum_k \eta_k^n(1-\eta_k^n)}{a_n^2 S_n^3} \biggr).
$$
The last term is $o \bigl(\frac{a_n}{S_n} \bigr)$.
Applying the Theorem of G\"artner-Ellis, \cite[Theorem 2.3.6]{Dembo/Zeitouni:LargeDeviations}, the result follows.
\end{proof}

\begin{proof}[Proof of Theorem \ref{GUE-MDP1}]
Now the first statement of Theorem \ref{GUE-MDP1} follows since $\V( \chi_n(D_n)) \to \infty$, see \cite[Cor. 4.2.27]{Zeitounibook}.
In particular for any $I=[y, \infty)$ with $y \in (-2,2)$
$$
\tilde{Z}_n :=\frac{N_I(W_n') - \E[N_I(W_n')]}{a_n \sqrt{\V(N_I(W_n'))}}
$$
satisfies the MDP. The MDP with numerics (see Remark \ref{renew}) follows, since the sequences $(\tilde{Z}_n)_n$ and $(\hat{Z}_n)_n$ are exponentially equivalent in the sense of  definition
\cite[Definition 4.2.10]{Dembo/Zeitouni:LargeDeviations},
and hence the result follows from \cite[Theorem 4.2.13]{Dembo/Zeitouni:LargeDeviations}: Let
$$
Z_n'' := \frac{N_I(W_n') - n \varrho_{sc}(I)}{a_n \, \sqrt{ \V(N_I(W_n'))}}.
$$
Since $|\tilde{Z}_n - Z_n''| = \big| \frac{\E[N_I(W_n')] - n \varrho_{sc}(I)}{a_n \sqrt{ \V(N_I(W_n'))}} \big| \to 0$ as $n \to \infty$,   
$(\tilde{Z}_n)_n$ and $(Z_n'')_n$ 
are exponentially equivalent. Moreover by Taylor we obtain
$
|Z_n'' - \hat{Z}_n| = \frac{o(1)}{a_n} \frac{N_I(W_n') - n \varrho_{sc}(I)}{\sqrt{\V(N_I(W_n'))}}
$
and the MDP for $(Z_n'')_n$ implies 
$
\limsup_{n \to \infty} \frac{1}{a_n^2} \log P \bigl( |Z_n'' - \hat{Z}_n| > \varepsilon \bigr) = - \infty
$
for any $\varepsilon >0$.
\end{proof}

\section{Moderate deviations for eigenvalues in the bulk}

Under certain conditions on $i$ it was proved in \cite{Gustavsson:2005} that the $i$-th eigenvalue $\lambda_i$ of the GUE $W_n'$ satisfies a CLT.
Consider $t(x) \in [-2,2]$ defined for $x \in [0,1]$ by
$$
x = \int_{-2}^{t(x)} d \varrho_{sc}(t) = \frac{1}{2 \pi} \int_{-2}^{t(x)} \sqrt{4 - x^2} \, dx.
$$
Then for $i=i(n)$ such that $i/n \to a \in (0,1)$ as $n \to \infty$ (i.e. $\lambda_i$ is eigenvalue in the bulk), $\lambda_i(W_n')$ satisfies a CLT:
\begin{equation} \label{CLT-G}
\sqrt{\frac{4 - t(i/n)^2}{2}} \frac{\lambda_i(W_n') - t(i/n)}{\frac{\sqrt{\log n}}{n}} \to N(0,1)
\end{equation}
for $n \to \infty$. Remark that $t(i/n)$ is sometimes called the {\it classical or expected location} of the $i$-th eigenvalue. The standard deviation
is $\frac{\sqrt{\log n}}{\pi \sqrt{2}} \, \frac{1}{n \varrho_{sc}(t(i/n))}$. Note that from the semicircular law, the factor $\frac{1}{n \varrho_{sc}(t(i/n))}$ is the mean
eigenvalue spacing.

The proof in \cite{Gustavsson:2005} is achieved by the tight relation between eigenvalues and the counting function
expressed by the elementary equivalence, for $I(y)=[y, \infty)$, $y \in \R$,
\begin{equation} \label{relation}
N_{I(y)}(W_n) \leq n-i \,\, \text{if and only if} \,\, \lambda_i \leq y.
\end{equation}
Hence the theorem due to Costin and Lebowitz as well as Soshnikov, see \cite{Soshnikov:2000}, can be applied. Moreover the proof
in \cite{Gustavsson:2005} relies on fine asymptotics for the Airy function and the Hermite polynomials due to \cite{Deift/Thomas:1999}.

Our first result in this Section is a corresponding MDP for $\lambda_i$ in the bulk:

\begin{theorem} \label{MDP-GUE-Gust}
Consider the GUE matrix $W_n' = \frac{1}{\sqrt{n}} M_n'$. Consider $i=i(n)$ such that $i/n \to a \in (0,1)$ as $n \to \infty$. If $\lambda_i$ denotes the
eigenvalue number $i$ in the GUE matrix $W_n'$ it holds that for any sequence $(a_n)_n$ of real numbers such that
$1 \ll a_n \ll \sqrt{\log n}$ the sequence $(X_n')_n$ with
$$
X_n' = \sqrt{\frac{4 - t(i/n)^2}{2}} \frac{\lambda_i(W_n') - t(i/n)}{a_n \, \frac{\sqrt{\log n}}{n}}
$$
satisfies a MDP with speed $a_n^2$ and rate function $I(x)=\frac{x^2}{2}$.
\end{theorem}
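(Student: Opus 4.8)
The plan is to transfer the moderate deviation principle from the eigenvalue counting function, Theorem \ref{GUE-MDP1}, to the individual eigenvalue $\lambda_i$, following the route by which Gustavsson deduces the central limit theorem \eqref{CLT-G} from the Costin--Lebowitz--Soshnikov theorem: the link is the elementary equivalence \eqref{relation}, now fed with Theorem \ref{GUE-MDP1} instead of the CLT, together with the variance asymptotics \eqref{asymp}.

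Fix $x\in\R$ and set $y_n=y_n(x):=t(i/n)+x\,\sqrt{2/(4-t(i/n)^2)}\;a_n\sqrt{\log n}\,/\,n$. Since $t(i/n)\to t(a)\in(-2,2)$ and the shift tends to $0$, one has $y_n\in(-2,2)$ for large $n$, and because the law of $\lambda_i$ has no atoms, \eqref{relation} gives $P(X_n'\ge x)=P(\lambda_i(W_n')\ge y_n)=P(N_{I(y_n)}(W_n')\ge n-i+1)$ and $P(X_n'\le x)=P(N_{I(y_n)}(W_n')\le n-i)$. I then need the mean and variance of $N_{I(y_n)}(W_n')$. Writing $y_n=G^{-1}(k_n/n)$ with $k_n:=n\,G(y_n)$, one has $k_n/n\to a$, so \eqref{asymp} yields $V_n:=\V(N_{I(y_n)}(W_n'))=(\tfrac1{2\pi^2}+o(1))\log n\to\infty$ and $\E[N_{I(y_n)}(W_n')]=n\,\varrho_{sc}(I(y_n))+O(\log n/n)$. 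Taylor-expanding $y\mapsto\varrho_{sc}(I(y))$ at $y=t(i/n)$, whose derivative there equals $-\tfrac1{2\pi}\sqrt{4-t(i/n)^2}$, and using the identity $n\,\varrho_{sc}(I(t(i/n)))=n-i$, the shift in $y_n$ has been chosen precisely so that $n\bigl(\varrho_{sc}(I(t(i/n)))-\varrho_{sc}(I(y_n))\bigr)=\tfrac{x\,a_n\sqrt{\log n}}{\sqrt2\,\pi}=x\,a_n\sqrt{V_n}\,(1+o(1))$, while the second-order term is $O(a_n^2\log n/n)=o(a_n\sqrt{\log n})$ because $a_n\ll\sqrt{\log n}$. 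Hence $\E[N_{I(y_n)}(W_n')]=n-i-x\,a_n\sqrt{V_n}\,(1+o(1))$.

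Because $1\ll a_n\ll\sqrt{\log n}$ forces $1\ll a_n\ll\sqrt{V_n}$ and $V_n\to\infty$, Theorem \ref{GUE-MDP1} applies to the interval sequence $I(y_n)$: the sequence $Z_n^{(x)}:=(N_{I(y_n)}(W_n')-\E[N_{I(y_n)}(W_n')])/(a_n\sqrt{V_n})$ satisfies an MDP with speed $a_n^2$ and rate $z\mapsto z^2/2$. By the previous step $P(X_n'\ge x)=P(Z_n^{(x)}\ge c_n)$ with $c_n=c_n(x)=(n-i+1-\E[N_{I(y_n)}(W_n')])/(a_n\sqrt{V_n})\to x$, and similarly $P(X_n'\le x)=P(Z_n^{(x)}\le c_n')$ with $c_n'\to x$. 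For $x>0$ and $\eps\in(0,x)$, for $n$ large $x-\eps\le c_n\le x+\eps$, so the MDP upper bound on the closed set $[x-\eps,\infty)$ and lower bound on the open set $(x+\eps,\infty)$ give $-\tfrac{(x+\eps)^2}{2}\le\liminf_n a_n^{-2}\log P(X_n'\ge x)\le\limsup_n a_n^{-2}\log P(X_n'\ge x)\le-\tfrac{(x-\eps)^2}{2}$; letting $\eps\downarrow0$ gives $\lim_n a_n^{-2}\log P(X_n'\ge x)=-x^2/2$, and the symmetric argument gives $\lim_n a_n^{-2}\log P(X_n'\le x)=-x^2/2$ for $x<0$. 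These one-sided estimates imply exponential tightness at speed $a_n^2$ (since $x^2/2\to\infty$) and, together with the standard criterion for real-valued sequences with convex rate function — for a closed set $F\not\ni0$ bound $P(X_n'\in F)$ by the probabilities of the two half-lines closest to $0$ and use monotonicity of $x\mapsto x^2/2$ on each side; for an open set $O$ and $x\in O$ use the two-sided estimate on a small interval about $x$, while $P(X_n'\in O)\to1$ when $0\in O$ — yield the asserted MDP for $(X_n')_n$ with speed $a_n^2$ and rate function $x^2/2$.

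The step I expect to be the main obstacle is the one in the second paragraph: one must verify that Gustavsson's asymptotics \eqref{asymp} remain valid for the $n$-dependent quantile $y_n(x)$ with error terms controlled (the quantile moves only by $O(a_n\sqrt{\log n}/n)\to0$, hence stays in a compact subset of the bulk, which is all one needs), and, above all, check the exact matching $n\bigl(\varrho_{sc}(I(t(i/n)))-\varrho_{sc}(I(y_n))\bigr)=x\,a_n\sqrt{V_n}(1+o(1))$ — it is here that the normalising factor $\sqrt{(4-t(i/n)^2)/2}$ in $X_n'$ is tuned against the variance constant $\tfrac1{2\pi^2}$ of \eqref{asymp} so that the rate function comes out as $x^2/2$ and not some $x$-dependent rescaling. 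Everything else is the transfer-of-MDP bookkeeping through \eqref{relation}.
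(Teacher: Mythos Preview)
Your proof is correct and follows essentially the same route as the paper's own argument: both convert $P(X_n'\le\xi)$ (resp.\ $\ge\xi$) into a statement about $N_{I_n}(W_n')$ via the equivalence \eqref{relation} with the same choice of moving interval $I_n=[t(i/n)+\xi\,b(n),\infty)$, use the asymptotics \eqref{asymp} together with a Taylor expansion of $\varrho_{sc}$ to show that the centering constant equals $\xi+o(1)$, apply Theorem~\ref{GUE-MDP1} to obtain the one-sided limits \eqref{r1}--\eqref{r2}, and finish with the standard weak-LDP-from-a-basis plus exponential tightness argument (the paper cites \cite[Theorem 4.1.11, Lemma 1.2.18]{Dembo/Zeitouni:LargeDeviations} for what you sketch by hand). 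The ``main obstacle'' you flag is exactly the computation the paper carries out explicitly, and your treatment of it is accurate.
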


Interesting enough, this result can be extended to large families of Hermitian Wigner matrices satisfying tail-condition $(T)$
by means of the Four Moment Theorem of Tao and Vu:

\begin{theorem} \label{uni1}
Consider a Hermitian Wigner matrix $W_n = \frac{1}{\sqrt{n}} M_n$ whose entries satisfy tail-condition $(T)$ and match
the corresponding entries of GUE up to order 4. Consider $i=i(n)$ such that $i/n \to a \in (0,1)$ as $n \to \infty$. If $\lambda_i$ denotes the
eigenvalue number $i$ of $W_n$ it holds that for any sequence $(a_n)_n$ of real numbers such that
$1 \ll a_n \ll \sqrt{\log n}$, the sequence $(X_n)_n$ with
$$
X_n = \sqrt{\frac{4 - t(i/n)^2}{2}} \frac{\lambda_i(W_n) - t(i/n)}{a_n \, \frac{\sqrt{\log n}}{n}}
$$
satisfies a MDP with speed $a_n^2$ and rate function $I(x)=\frac{x^2}{2}$.
\end{theorem}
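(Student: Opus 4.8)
The plan is to deduce the theorem from its GUE counterpart Theorem~\ref{MDP-GUE-Gust} by the Four Moment Theorem of Tao and Vu; the decisive point is that, since the hypothesis $a_n\ll\sqrt{\log n}$ forces $a_n^2=o(\log n)$, the moderate deviation probabilities to be estimated are of order $e^{-O(a_n^2)}$, hence \emph{far larger} than the polynomial-in-$n$ error produced by a Four Moment comparison. First I would reduce the MDP to one-sided tails: for real random variables $(U_n)_n$ and speed $a_n^2\to\infty$, the MDP with good rate function $I(x)=x^2/2$ holds as soon as
$$
\lim_{n\to\infty}\frac{1}{a_n^2}\log P(U_n > x)=-\frac{x^2}{2}
\qquad\text{and}\qquad
\lim_{n\to\infty}\frac{1}{a_n^2}\log P(U_n \le -x)=-\frac{x^2}{2}
$$
for every $x>0$. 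Indeed, for a closed $F$ with $d:=\inf_F I<\infty$ one has $F\subseteq(-\infty,-\sqrt{2d}\,]\cup[\sqrt{2d},\infty)$, and a union bound together with the assumed tail limits gives $\limsup_n a_n^{-2}\log P(U_n\in F)\le -d$; for an open $O$ and a point $x^\ast\in O$ with $I(x^\ast)$ close to $\inf_O I$, a small interval $(x^\ast-\eta,x^\ast+\eta)\subseteq O$ together with the negligibility of the tail at level $|x^\ast|+\eta$ against the one at level $|x^\ast|-\eta$ gives $\liminf_n a_n^{-2}\log P(U_n\in O)\ge -(|x^\ast|-\eta)^2/2$, and letting $\eta\downarrow0$ and $x^\ast$ approach the infimum concludes; the level sets of $I$ are compact. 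By Theorem~\ref{MDP-GUE-Gust} these two limits hold for the GUE variable $X_n'$ (they are the closed- and open-half-line instances of the MDP proved there), so it remains to transfer them to $X_n$.

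To do so, fix $x>0$, put $c:=\sqrt{(4-t(i/n)^2)/2}$ and $y_n^{\pm}:=t(i/n)\pm x\,a_n\sqrt{\log n}/(c\,n)$. By the equivalence \eqref{relation},
$$
\{X_n > x\}=\{N_{[y_n^{+},\infty)}(W_n)\ge n-i+1\}
\qquad\text{and}\qquad
\{X_n \le -x\}=\{N_{[y_n^{-},\infty)}(W_n)\le n-i\},
$$
and the same identities hold for $W_n'$. Thus one only has to compare, for $W_n$ and $W_n'$, probabilities of events of the form $\{N_{[y,\infty)}(\,\cdot\,)\ge m\}$ with $y\in(-2,2)$ fixed up to a vanishing perturbation --- which is exactly the comparison performed by Dallaporta and Vu in their proof of the CLT for the eigenvalue counting function. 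One writes the indicator of $\{N_{[y,\infty)}\ge m\}$ as a smooth function of a regularised counting function at a small scale $\eta=n^{-\kappa}$, chosen so that the relevant partial derivatives with respect to the eigenvalues remain of polynomial size; since $M_n$ satisfies tail-condition $(T)$, matches the entries of GUE up to order $4$, and $i/n\to a\in(0,1)$, the Four Moment Theorem (\cite{Tao/Vu:2009}, \cite{Tao/Vu:2010}) applies, and the regularisation error is absorbed by the localization estimates of Erd\"os--Yau--Yin \cite{Erdoes/Yau/Yin:2010}. Tracking the two resulting errors, which are both polynomially small in $n$, one obtains a constant $c'>0$, independent of $n$, with
$$
\bigl|P(X_n > x)-P(X_n' > x)\bigr|\le n^{-c'}
\qquad\text{and}\qquad
\bigl|P(X_n \le -x)-P(X_n' \le -x)\bigr|\le n^{-c'}.
$$

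Finally one passes to the limit, and here $a_n^2=o(\log n)$ is used decisively. By Theorem~\ref{MDP-GUE-Gust}, $\lim_n a_n^{-2}\log P(X_n' > x)=-x^2/2$, i.e. $P(X_n' > x)=e^{-(x^2/2+o(1))a_n^2}$, and since $a_n^2=o(\log n)$ this eventually exceeds $2n^{-c'}=2e^{-c'\log n}$. Hence $P(X_n>x)\le P(X_n'>x)+n^{-c'}$ yields $\limsup_n a_n^{-2}\log P(X_n>x)\le -x^2/2$, while for $n$ large $P(X_n>x)\ge P(X_n'>x)-n^{-c'}\ge\tfrac12 P(X_n'>x)$ yields $\liminf_n a_n^{-2}\log P(X_n>x)\ge -x^2/2$; the same argument applies to $\{X_n\le -x\}$, and by the first paragraph $(X_n)_n$ satisfies the MDP (the normalisation here is explicit, so no variance asymptotics are involved). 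The main obstacle is the middle step: Dallaporta and Vu only need the qualitative statement ``the difference tends to $0$'', so one must verify that their transfer really comes with a polynomial rate $n^{-c'}$ --- it does, because both the Four Moment error and the probability of the bad event in the localization estimate are of this type --- and one must keep the regularisation scale and the test-function derivatives within the range admitted by the Four Moment Theorem. Once this bookkeeping is in place, the moderate deviation scaling $a_n^2=o(\log n)$ is exactly what renders the polynomial error invisible on the exponential scale $a_n^2$.
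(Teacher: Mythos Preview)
Your proof is correct, and the crucial observation --- that $a_n^2=o(\log n)$ renders a polynomial error $n^{-c'}$ invisible on the exponential scale $a_n^2$ --- is exactly the mechanism the paper exploits. However, your route is more circuitous than the paper's. You translate the eigenvalue event $\{X_n>x\}$ into a counting-function event via \eqref{relation}, and then invoke the Dallaporta--Vu transfer (Four Moment Theorem plus Erd\H{o}s--Yau--Yin localization) to compare the counting-function probabilities for $W_n$ and $W_n'$. The paper instead applies the Four Moment Theorem \emph{directly} to the single eigenvalue $\lambda_i$: it quotes Tao and Vu's bump-function comparison (their Corollary~21, reproduced here as \eqref{inequ}), which already gives
\[
P\bigl(\lambda_i(A_n')\in I_-\bigr)-n^{-c_0}\;\le\;P\bigl(\lambda_i(A_n)\in I\bigr)\;\le\;P\bigl(\lambda_i(A_n')\in I_+\bigr)+n^{-c_0}
\]
for any interval $I$, with $I_\pm$ differing from $I$ only by $n^{-c_0/10}$ at the endpoints. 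Plugging in the relevant intervals and using Theorem~\ref{MDP-GUE-Gust} for $A_n'$ yields the upper and lower MDP bounds at once. This is cleaner for two reasons: no detour through counting functions (your passage through $N_{[y,\infty)}$ is immediately undone by \eqref{relation} again, since $\{N_{[y,\infty)}\ge m\}=\{\lambda_{n-m+1}\ge y\}$), and no localization input is needed at this stage --- the Erd\H{o}s--Yau--Yin rigidity only enters later, in Section~3, to identify $\E[N_I(W_n)]$ and $\V(N_I(W_n))$ for the MDP ``without numerics''. Your approach buys nothing extra here; it simply imports heavier machinery than required.
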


\begin{remark} In \cite{Gustavsson:2005} a CLT at the edge of the spectrum was considered also. The proof applies 
the result of Costin and Lebowitz as well as fine asymptotics presented in \cite{Deift/Thomas:1999}. Consider
$i \to \infty$ such that $i/n \to 0$ as $n \to \infty$ and consider $\lambda_{n-i}$, eigenvalue number $n-i$ in the
GUE or Hermitian Wigner case. A CLT for the rescaled $\lambda_{n-i}$ is stated in \cite[Theorem 1.2]{Gustavsson:2005}.
We would be able to formulate and prove a MDP for eigenvalue $\lambda_{n-i}$, but it is not the main focus
of this paper. We omit this.
\end{remark}

\begin{proof}[Proof of Theorem \ref{MDP-GUE-Gust}]
The proof is oriented to the proof of \cite[Theorem 1.1]{Gustavsson:2005} and will apply the precise asymptotic behaviour of the expectation and of the variance
of the counting function $N_I(W_n')$, see \eqref{asymp}, which is a reformulation of \cite[Lemma 2.1-2.3]{Gustavsson:2005}.
Let $P_n$ denote the probability of the GUE determinantal point processes, and set
$$
I_n := \biggl[ t(i/n) + \xi \, a_n \frac{\sqrt{\log n}}{n} \frac{\sqrt{2}}{\sqrt{4 - t(i/n)^2}}, \infty \biggr).
$$
Now we apply relation \eqref{relation} and obtain
\begin{eqnarray*}
& & P_n \biggl( \frac{\lambda_i(W_n') - t(i/n)}{a_n \frac{\sqrt{\log n}}{n} \frac{\sqrt{2}}{\sqrt{4 - t(i/n)^2}}} \leq \xi \biggr) =  
P_n \biggl( \lambda_i(W_n') \leq \xi \, a_n \, \frac{\sqrt{\log n}}{n} \frac{\sqrt{2}}{\sqrt{4 - t(i/n)^2}} + t(i/n) \biggr) \\
& = & P_n \bigl( N_{I_n}(W_n') \leq n-i \bigr) = P_n \biggl( \frac{N_{I_n}(W_n') - \E[N_{I_n}(W_n')]}{a_n \, (\V(N_{I_n}(W_n')))^{1/2}} \leq 
\frac{n-i - \E[N_{I_n}(W_n')]}{a_n \, (\V(N_{I_n}(W_n')))^{1/2}} \biggr).
\end{eqnarray*}
Since $i/n \to a \in (0,1)$, by definition of $t(x)$ we obtain $t(i/n) \in (-2,2)$. Moreover since $a_n \ll \sqrt{\log n}$ we have
$\xi \, a_n \, \frac{\sqrt{\log n}}{n} \frac{\sqrt{2}}{\sqrt{4 - t(i/n)^2}} + t(i/n) \in (-2,2)$ for $n$ large. Therefore with \eqref{asymp}
we have for $n$ sufficiently large
that
$$
\E [ N_{I_n}(W_n')] = n \, \varrho_{sc}(I_n) + O \bigl(\frac{\log n}{n} \bigr) \,\, \text{and} \,\, \V(N_{I_n}(W_n')) = \bigl( \frac{1}{2\pi^2} + o(1) \bigr) \log n.
$$
With $b(n) :=  a_n \, \frac{\sqrt{\log n}}{n} \frac{\sqrt{2}}{\sqrt{4 - t(i/n)^2}}$ and $f_n(t(i/n)) := t(i/n) + \xi \, b(n)$ we get from symmetry
$$
\varrho_{sc}(I_n) = \int_{f_n(t(i/n))}^{\infty} \varrho_{sc}(x) \, dx = \frac 12 - \int_{0}^{f_n(t(i/n))} \varrho_{sc}(x) \, dx = 1 - \int_{-2}^{f_n(t(i/n))} \varrho_{sc}(x) \, dx.
$$
Now
$$
\int_{-2}^{f_n(t(i/n))} \varrho_{sc}(x) \, dx = \int_{-2}^{t(i/n)}  \varrho_{sc}(x) \, dx + \int_{t(i/n)}^{f_n(t(i/n))} \varrho_{sc}(x) \, dx = \frac in + 
\int_{t(i/n)}^{f_n(t(i/n))} \varrho_{sc}(x) \, dx
$$
and
$$
\int_{t(i/n)}^{f_n(t(i/n))} \varrho_{sc}(x) \, dx = \xi b(n) \frac{1}{2 \pi} \sqrt{4 - t(i/n)^2} + O \bigl(b(n)^2).
$$
Summarizing we obtain
$$
n \, \varrho_{sc}(I_n) = n-i - \xi \, a_n \, \sqrt{ \log n} \frac{1}{\sqrt{2} \pi} + O \bigl( \frac{a_n^2 \log n}{n} \bigr)
$$
and therefore
$$
\frac{n-i - \E[N_{I_n}(W_n')]}{a_n \, (\V(N_{I_n}(W_n')))^{1/2}} = \xi + \varepsilon(n),
$$
where $\varepsilon(n) \to 0$ as $n \to \infty$. By Theorem \ref{GUE-MDP1} we obtain for every $\xi < 0$
\begin{equation} \label{r1}
\lim_{n \to \infty} \frac{1}{a_n^2} \log P_n  \biggl( \frac{\lambda_i(W_n') - t(i/n)}{a_n \frac{\sqrt{\log n}}{n} \frac{\sqrt{2}}{\sqrt{4 - t(i/n)^2}}} \leq \xi \biggr) 
= - \frac{\xi^2}{2}.
\end{equation}
With
$$
 P_n  \biggl( \frac{\lambda_i(W_n') - t(i/n)}{a_n \frac{\sqrt{\log n}}{n} \frac{\sqrt{2}}{\sqrt{4 - t(i/n)^2}}} \geq \xi \biggr) = P_n \bigl( N_{I_n}(W_n') \geq n-i+1 \bigr)
$$
the same calculations lead, for every $\xi >0$, to
\begin{equation} \label{r2}
\lim_{n \to \infty} \frac{1}{a_n^2} \log P_n  \biggl( \frac{\lambda_i(W_n') - t(i/n)}{a_n \frac{\sqrt{\log n}}{n} \frac{\sqrt{2}}{\sqrt{4 - t(i/n)^2}}} \geq \xi \biggr) 
= - \frac{\xi^2}{2}.
\end{equation}
Hence the conclusion follows: see for example \cite[Proof of Theorem 2.2.3]{Dembo/Zeitouni:LargeDeviations}. To be more precise we apply the preceding results \eqref{r1} 
and \eqref{r2} with Theorem 4.1.11 in \cite{Dembo/Zeitouni:LargeDeviations}, which allows us to derive a MDP from the limiting behaviour of probabilities for a basis
of topology. For the latter, we choose all open intervals $(a,b)$, where at least one of the endpoints is finite and where none of the endpoints is zero. Denote
the family of such intervals by ${\mathcal U}$. From \eqref{r1} and \eqref{r2}, it follows for each $U=(a,b) \in {\mathcal U}$,
$$
{\mathcal L}_{U}: = \lim_{n \to \infty} \frac{1}{a_n^2} \log P \bigl( X_n' \in U \bigr) = \left\{ \begin{array}{r@{\quad:\quad}l}
b^2/2 & a < b < 0 \\ 0 & a < 0 < b \\ a^2/2 & 0<a<b \end{array} \right. 
$$
By \cite[Theorem 4.1.11]{Dembo/Zeitouni:LargeDeviations}, $(X_n')_n$ satisfies a weak MDP with speed $a_n^2$ and rate function 
$$
t \mapsto \sup_{U \in {\mathcal U}; t \in U} {\mathcal L}_U = \frac{t^2}{2}.
$$
With \eqref{r2}, it follows that $(X_n')_n$ is exponentially tight, hence by Lemma 1.2.18 in \cite{Dembo/Zeitouni:LargeDeviations}, $(X_n')_n$ satisfies the MDP
with the same speed and the same good rate function. This completes the proof.
\end{proof}

To extend the result of Theorem \ref{MDP-GUE-Gust} to Hermitian Wigner matrices satisfying tail-condition $(T)$, we will apply the Four Moment Theorem (for the bulk of
the spectra of Wigner matrices), see \cite[Theorem 15]{Tao/Vu:2009}.

\begin{theorem}[Four Moment Theorem due to Tao and Vu]
There is a small positive constant $c_0$ such that for every $0 < \varepsilon < 1$ and $k \geq 1$ the following holds. Let $M_n$ and $M_n'$ be two Hermitian Wigner
matrices satisfying tail-condition $(T)$. Assume furthermore that for any $1 \leq i <j \leq n$, $Z_{ij}$ and $Z_{ij}'$ match to order 4 and for any $1 \leq i \leq n$, $Y_i$
and $Y_i'$ match to order 2. Set $A_n :=\sqrt{n} M_n$ and $A_n' := \sqrt{n} M_n'$, and let $G : {\Bbb R}^k \to {\Bbb R}$ be a smooth function obeying the derivative bounds 
$|\nabla^jG(x)| \leq n^{c_0}$ for all $0 \leq j \leq 5$ and $x \in {\Bbb R}^k$. Then for any $\varepsilon n \leq i_1 < i_2 \cdots < i_k \leq (1- \varepsilon)n$, and
for $n$ sufficiently large depending on $\varepsilon$, $k$ and the constants $C, C'$ in tail-condition $(T)$, we have
\begin{equation} \label{taovu}
|\E \bigl( G(\lambda_{i_1}(A_n), \ldots, \lambda_{i_k}(A_n) ) \bigr) - \E \bigl( G(\lambda_{i_1}(A_n'), \ldots, \lambda_{i_k}(A_n') ) \bigr)| \leq n^{-c_0}.
\end{equation}
\end{theorem}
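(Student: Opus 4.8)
The statement is quoted from \cite{Tao/Vu:2009}; I outline the shape of its proof. The plan is a Lindeberg-type \emph{swapping} argument: interpolate from $A_n$ to $A_n'$ by replacing the $N=O(n^2)$ entry-pairs one at a time, and show each replacement perturbs $\E\,G(\lambda_{i_1},\dots,\lambda_{i_k})$ by $O(n^{-5/2+o(1)})$, so that the telescoping sum is $O(n^{-1/2+o(1)})=O(n^{-c_0})$ for $c_0$ small.

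\textbf{Step 1: reduction to the resolvent.} First I would rewrite the eigenvalue statistic in terms of the Stieltjes transform $s(z)=\frac1n\operatorname{tr}(W_n-z)^{-1}$, $W_n=\frac1{\sqrt n}A_n$, evaluated on a mesh of points $z=E+\mathrm i\eta$ with $\eta$ slightly above the mean spacing $1/n$. Using a priori \emph{rigidity/localization} of the bulk eigenvalues — available under tail-condition $(T)$ — each $\lambda_{i_j}$ lies within $n^{-1+o(1)}$ of its classical location with overwhelming probability, so up to errors $O(n^{-A})$ for every $A$ one may replace $G(\lambda_{i_1},\dots,\lambda_{i_k})$ by a \emph{bounded} smooth functional $\widetilde G$ of $(\operatorname{Im}s(z))_z$; the hypothesis $|\nabla^jG|\le n^{c_0}$ controls how the derivative bounds transfer to $\widetilde G$, and one must also retain level-repulsion lower bounds on the eigenvalue gaps, since these enter the derivatives of $\lambda_{i_j}$ with respect to the matrix entries.

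\textbf{Step 2: one swap.} Fix $1\le p<q\le n$ and let $M,\widetilde M$ be Hermitian Wigner matrices differing only in the Hermitian pair $Z_{pq}$ versus $Z'_{pq}$, which match to order $4$. Write $M=M^{(0)}+\frac1{\sqrt n}V$, where $M^{(0)}$ has those two entries zeroed out and $V=Z_{pq}\,e_pe_q^*+\overline{Z_{pq}}\,e_qe_p^*$ has rank $\le2$. Expanding the resolvent, $(M-z)^{-1}=\sum_{m=0}^{4}\bigl(-\tfrac1{\sqrt n}(M^{(0)}-z)^{-1}V\bigr)^m(M^{(0)}-z)^{-1}+R_5$, and substituting into $\widetilde G$, the terms of degree $\le4$ in $Z_{pq}$ have coefficients measurable in $M^{(0)}$; conditioning on $M^{(0)}$, independence and the matching of the first four moments make their expectations agree for $M$ and $\widetilde M$. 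Hence the change equals $\E R_5$ (plus its analogue in $Z'_{pq}$); one performs the parallel expansion truncated at order $2$ for the $n$ diagonal entries, where only matching of second moments is assumed and there is more room.

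\textbf{Step 3: the error term, and the obstacle.} This is the heart of the matter. At scale $\eta=n^{-1+\delta}$ the operator-norm bound $\|(M^{(0)}-z)^{-1}\|\approx\eta^{-1}$ is far too lossy; instead one controls individual Green's-function entries $G_{ij}(z)$ and the sums $\frac1n\sum_i|G_{ij}(z)|^2=\operatorname{Im}G_{jj}(z)/(n\eta)$ via the \emph{local semicircle law} and eigenvector \emph{delocalization}, whose derivation from tail-condition $(T)$ is the genuinely difficult input. Granting these, each factor $\frac1{\sqrt n}V(M^{(0)}-z)^{-1}$ yields a true gain and — using that $V$ touches only two rows and two columns — $\E R_5=O(n^{-5/2+O(\delta)})$ per swap; summing over the $\binom n2+n$ swaps gives total error $O(n^{-1/2+O(\delta)})$, and choosing $\delta$ small and $c_0<1/2$ (small enough that the $n^{c_0}$ derivative bounds on $G$ do not overwhelm the gains of Steps~1--3) yields the claimed $O(n^{-c_0})$. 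The main obstacle is exactly obtaining those high-probability Green's-function bounds at the optimal scale $\eta\approx n^{-1}$; once the local law, delocalization and rigidity are in hand, the rest is bookkeeping.
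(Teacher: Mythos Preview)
The paper does not prove this theorem at all: it is simply quoted verbatim as \cite[Theorem~15]{Tao/Vu:2009} and then used as a black box (via the immediate Corollary~\ref{taovu-cor}) in the proof of Theorem~\ref{uni1}. So there is no ``paper's own proof'' to compare your proposal against; you correctly identify this in your opening sentence.

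As a sketch of the underlying argument your outline is reasonable, but it is worth noting that what you describe is closer to the \emph{Green function comparison} variant (resolvent expansion at scale $\eta\approx n^{-1+\delta}$, controlling $G_{ij}(z)$ via the local semicircle law) developed by Erd\H{o}s, Yau and collaborators than to Tao and Vu's original proof in \cite{Tao/Vu:2009}. Their original argument also uses Lindeberg swapping and a fourth-order Taylor expansion, but works directly with the eigenvalues via the Hadamard variation formulas (first and higher derivatives of $\lambda_i$ with respect to a matrix entry, expressed through eigenvector components), and the crucial a~priori input is eigenvector \emph{delocalization} together with a lower bound on eigenvalue gaps (their ``gap condition''), rather than entrywise resolvent bounds. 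Both routes lead to the same $O(n^{-5/2+o(1)})$ per-swap error and hence to \eqref{taovu}; your Step~1 reduction through the Stieltjes transform is not part of either, and is unnecessary since $G$ is already assumed smooth with controlled derivatives.
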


Applying this Theorem for the special case when $M_n'$ is GUE, one obtains \cite[Corollary 18]{Tao/Vu:2009}:

\begin{corollary} \label{taovu-cor}
Let $M_n$ be a Hermitian Wigner matrix whose atom distribution $\xi$ satisfies $\E \xi^3=0$ and $\E \xi^4 = \frac 34$ and tail-condition $(T)$, and $M_n'$ be a random matrix
sampled from GUE. Then with $G, A_n, A_n'$ as in the previous theorem, and $n$ sufficiently large, one has
\begin{equation} \label{taovu2}
|\E \bigl( G(\lambda_{i_1}(A_n), \ldots, \lambda_{i_k}(A_n) ) \bigr) - \E \bigl( G(\lambda_{i_1}(A_n'), \ldots, \lambda_{i_k}(A_n') ) \bigr)| \leq n^{-c_0}.
\end{equation}
\end{corollary}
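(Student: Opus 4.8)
The plan is to read off Corollary \ref{taovu-cor} as the specialisation of the Four Moment Theorem stated above to the case where the comparison matrix $M_n'$ is sampled from GUE; indeed this is \cite[Corollary 18]{Tao/Vu:2009}, and only a routine verification of the hypotheses is required. Concretely, with $M_n'$ a GUE matrix one has to check that (i) $M_n'$ itself satisfies tail-condition $(T)$, and (ii) the off-diagonal atoms $Z_{ij}$ of $M_n$ match those of $M_n'$ to order $4$ while the diagonal atoms $Y_i$ match to order $2$. Granting (i) and (ii), the estimate \eqref{taovu2} is exactly \eqref{taovu} applied with this choice of $M_n'$ and $A_n' = \sqrt{n}\,M_n'$.

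For (i): the off-diagonal entry of a GUE matrix is $Z_{12}' = (g + i h)/\sqrt{2}$ with $g,h$ independent standard real Gaussians, so its real and imaginary parts are independent $N(0,1/2)$ variables, and the diagonal entry $Y_1'$ is $N(0,1)$. A centred Gaussian $\eta$ with variance $\sigma^2$ obeys $P(|\eta| \geq s) \leq 2 e^{-s^2/(2\sigma^2)}$, so taking the constant $C=1$ in $(T)$ one gets $P(|\eta| \geq t^{C}) \leq 2 e^{-t^{2}/(2\sigma^{2})} \leq e^{-t}$ for all $t$ exceeding a suitable $C'$. Hence $M_n'$ satisfies tail-condition $(T)$, as required for the Four Moment Theorem.

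For (ii): writing the off-diagonal atom of $M_n$ as $Z_{12} = X + iY$, the normalisation $\E Z_{12}=0$, $\E Z_{12}^2 = 0$, $\E|Z_{12}|^2 = 1$ forces $\E X = \E Y = 0$, $\E[XY]=0$, and $\E X^2 = \E Y^2 = \tfrac12$ (the relation $\E Z_{12}^2 = \E X^2 - \E Y^2 + 2i\,\E[XY]=0$ gives $\E X^2 = \E Y^2$, and adding this to $\E|Z_{12}|^2 = \E X^2 + \E Y^2 = 1$ gives $\tfrac12$ each). By tail-condition $(T)$ the real and imaginary parts are independent, for both $M_n$ and the GUE, and both have law $\xi$; hence every mixed moment with $m+l\leq 4$ factorises as $\E[X^m Y^l] = \E[\xi^m]\,\E[\xi^l]$, and it remains only to compare the marginal moments $\E\xi,\E\xi^2,\E\xi^3,\E\xi^4$. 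These equal $0,\tfrac12,0,\tfrac34$ — the first two as just shown, the last two being precisely the hypotheses $\E\xi^3=0$ and $\E\xi^4=\tfrac34$ — which coincide with the moments of an $N(0,1/2)$ variable, i.e.\ with those of the real and imaginary parts of $Z_{12}'$. Thus $Z_{ij}$ and $Z_{ij}'$ match to order $4$. For the diagonal, $Y_1$ and $Y_1'$ both have mean $0$ and $\E Y_1^2 = 1 = \E(Y_1')^2$, so they match to order $2$.

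With (i) and (ii) in hand, the Four Moment Theorem applies verbatim and yields \eqref{taovu2} for all $n$ sufficiently large (depending on $\varepsilon$, $k$ and the constants $C,C'$). There is no genuinely hard step here; the only point demanding care is the alignment of normalisation conventions in (ii), namely that the real and imaginary parts of a GUE off-diagonal entry carry variance $\tfrac12$ and not $1$, which is exactly what fixes the numerical value $\E\xi^4 = 3\cdot(\tfrac12)^2 = \tfrac34$ in the statement. The quantitative content lies entirely in the Four Moment Theorem that has already been quoted.
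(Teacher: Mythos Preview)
Your proposal is correct and follows exactly the approach indicated in the paper: the corollary is simply the Four Moment Theorem specialised to $M_n'=$ GUE (cited as \cite[Corollary 18]{Tao/Vu:2009}), and you have filled in the routine check that GUE satisfies tail-condition $(T)$ and that the moment hypotheses $\E\xi^3=0$, $\E\xi^4=\tfrac34$ are precisely the matching-to-order-4 conditions against the $N(0,\tfrac12)$ real and imaginary parts of the GUE off-diagonals. The paper itself gives no further argument, so your verification is in fact more detailed than what appears there.
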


Now the universality of the MDP in Theorem \ref{uni1} follows along the lines of the proof of \cite[Corollary 21]{Tao/Vu:2009}.

\begin{proof}[Proof of Theorem \ref{uni1}]
Let $M_n$ be a Hermitian Wigner matrix whose entries satisfy tail-condition $(T)$ and match the corresponding entries of GUE up to order 4.
Let $i, a$ and $t(\cdot)$ be as in the statement of the Theorem, and let $c_0$ be as in Corollary \ref{taovu-cor}. Then \cite[(18)]{Tao/Vu:2009}
says that
\begin{equation} \label{inequ}
P_n \bigl( \lambda_i(A_n') \in I_{-} \bigr) - n^{-c_0} \leq P_n \bigl( \lambda_i(A_n) \in I \bigr) \leq P_n \bigl( \lambda_i(A_n') \in I_{+} \bigr) + n^{-c_0}
\end{equation}
for all intervals $[b,c]$, and $n$ sufficiently large depending on $i$ and the constants $C, C'$ of tail-condition $(T)$. Here $I_{+} := [b-n^{-c_0/10}, c+n^{-c_0/10}]$
and $I_{-} := [b+n^{-c_0/10}, c-n^{-c_0/10}]$. We present the argument of proof of \eqref{inequ} just to make the presentation more self-contained.
One can find a smooth bump function $G : {\mathbb R} \to {\mathbb R}_+$ which is equal to one on the smaller interval $I_{-}$ and vanishes outside
the larger interval $I_+$. It follows that $P_n \bigl( \lambda_i(A_n) \in I \bigr) \leq \E G(\lambda_i(A_n))$ and 
$\E G(\lambda_i(A_n')) \leq P_n \bigl( \lambda_i(A_n') \in I \bigr)$. One can choose $G$ to obey the condition $|\nabla^j G(x)| \leq n^{c_0}$ for $j=0, \ldots, 5$
and hence
by Corollary \ref{taovu-cor} one gets
$$
| \E G(\lambda_i(A_n)) - \E G(\lambda_i(A_n'))| \leq n^{-c_0}.
$$
Therefore the second inequality in \eqref{inequ} follows from the triangle inequality. The first inequality is proven similarly.

Now for $n$ sufficiently large we consider the interval $I_n := [b_n, c_n]$ with
$$
b_n :=  b \, a_n \sqrt{\log n} \frac{\sqrt{2}}{\sqrt{4 - t(i/n)^2}} + n t(i/n) \,\, \text{and} \,\, c_n :=  c \, a_n \sqrt{\log n} \frac{\sqrt{2}}{\sqrt{4 - t(i/n)^2}} + n t(i/n)
$$
with $b,c \in {\mathbb R}$,  $b \leq c$. Then for $X_n$ defined as in the statement of the Theorem we have
$$
P_n \bigl( X_n \in [b,c] \bigr) = P_n \biggl( \frac{\lambda_i(A_n) - n t(i/n)}{a_n \sqrt{\log n}\frac{\sqrt{2}}{\sqrt{4 - t(i/n)^2}}} \in [b,c] \biggr) 
= P_n \bigl( \lambda_i(A_n) \in I_n \bigr).
$$
With \eqref{inequ} and \cite[Lemma 1.2.15]{Dembo/Zeitouni:LargeDeviations} we obtain
$$
\limsup_{n \to \infty} \frac{1}{a_n^2} \log P_n \bigl( X_n \in [b,c] \bigr) \leq \max \biggl( \limsup_{n \to \infty} \frac{1}{a_n^2} \log P_n \bigl( \lambda_i(A_n') \in (I_n)_+
\bigr) ; \limsup_{n \to \infty} \frac{1}{a_n^2} \log n^{-c_0} \biggr).
$$
For the first object we have
$$
\limsup_{n \to \infty} \frac{1}{a_n^2} \log P_n \bigl( \lambda_i(A_n') \in (I_n)_+ \bigr)  = \limsup_{n \to \infty} \frac{1}{a_n^2} \log P_n \biggl( 
\frac{\lambda_i(A_n') - n t(i/n)}{a_n \sqrt{\log n}\frac{\sqrt{2}}{\sqrt{4 - t(i/n)^2}}} \in [b - \eta(n), c + \eta(n)] \biggr)
$$
with $\eta(n) = n^{-c_0/10} \bigl( a_n \sqrt{\log n} \frac{\sqrt{2}}{\sqrt{4 - t(i/n)^2}} \bigr)^{-1} \to 0$ as $n \to \infty$.
Since $c_0 >0$ and $ \log n / a_n^2 \to \infty$ for $n \to \infty$ by assumption, applying Theorem \ref{MDP-GUE-Gust} we have
$$
\limsup_{n \to \infty} \frac{1}{a_n^2} \log P_n \bigl( X_n \in [b,c] \bigr) \leq - \inf_{x \in [b,c]} \frac{x^2}{2}.
$$
Applying the first inequality in \eqref{inequ} in the same manner we also obtain the lower bound
$$
\liminf_{n \to \infty} \frac{1}{a_n^2} \log P_n \bigl( X_n \in [b,c] \bigr) \geq - \inf_{x \in [b,c]} \frac{x^2}{2}.
$$
Finally the argument in the last part of the proof of Theorem \ref{MDP-GUE-Gust} can be repeated to obtain the MDP for $(X_n)_n$.
\end{proof}

\section{Universality of the Moderate deviations}

In this section we proof Theorem \ref{main}. As announced, we will show that the MDP behaviour of eigenvalues in the bulk of the GUE (Theorem \ref{MDP-GUE-Gust})
extended to Hermitian Wigner matrices (Theorem \ref{uni1}) leads to the MDP with numerics for the counting function of eigenvalues of Hermitian Wigner matrices. 

\begin{proof}[Proof of Theorem \ref{main}]
For every $\xi \in {\mathbb R}$ we obtain that
$$
P_n \bigl( \hat{Z}_n \leq \xi \bigr) = P_n \bigl( N_{I(y)}(W_n) \leq n - i_n \bigr)
$$
with $i_n := n \varrho_{sc}((-\infty,y]) - \xi \, a_n \, \sqrt{\frac{1}{2 \pi^2} \log n}$. Hence using \eqref{relation} it follows
$$
P_n \bigl( \hat{Z}_n \leq \xi \bigr) = P_n \bigl( \lambda_{i_n} \leq y \bigr) = P_n \biggl( \sqrt{\frac{4- t(i_n/n)^2}{2}} \frac{\lambda_{i_n}(W_n) - t(i_n/n)}{a_n 
\frac{\sqrt{ \log n}}{n}} \leq \xi_n \biggr)
$$
with
$\xi_n := \sqrt{\frac{4- t(i_n/n)^2}{2}} \frac{y- t(i_n/n)}{a_n \frac{\sqrt{ \log n}}{n}}$. Now
$$
\frac{i_n}{n} = \varrho_{sc}((-\infty,y]) - \frac{\xi a_n \sqrt{\frac{1}{2 \pi^2} \log n}}{n} \to \varrho_{sc}((-\infty,y]) \in (0,1)
$$
for $n \to \infty$. We will prove that $\xi_n = \xi + o(1)$. Applying Theorem \ref{uni1}, it follows that
$$
\lim_{n \to \infty} \frac{1}{a_n^2} \log P_n \bigl( \hat{Z}_n \leq \xi \bigr) = -\frac{\xi^2}{2}
$$
for all $\xi < 0$. With $P_n \bigl( \hat{Z}_n \geq \xi \bigr) = P_n \bigl(N_{I(y)}(W_n) \geq n - i_n \bigr) = P_n \bigl( \lambda_{i_n+1} \geq y \bigr)$
the same calculations will lead, for any $\xi >0$, to
$$
\lim_{n \to \infty} \frac{1}{a_n^2} \log P_n \bigl( \hat{Z}_n \geq \xi \bigr) = -\frac{\xi^2}{2}.
$$
The MDP for $(\hat{Z}_n)_n$ (the MDP with numerics) now follows along the lines of the proof of Theorem \ref{MDP-GUE-Gust} (topological argument). 

The MDP for
$(Z_n)_n$ follows by the arguments given in the proof of Theorem \ref{GUE-MDP1}, using the deep fact that the expectation $\E[Y_n]$ and the variance $\V(Y_n)$
of the eigenvalue counting function have identical behaviours to the ones for GUE matrices:
$$
\E[Y_n] = n \varrho_{sc}(I(y)) + o(1)\,\, \text{and}\,\, \V(Y_n) = \bigl( \frac{1}{2 \pi^2} + o(1) \bigr) \log n.
$$
This result is established in \cite[Theorem 2]{Dallaporta/Vu:2011}, applying strong localization of the eigenvalues of Wigner matrices, a recent result
from \cite{Erdoes/Yau/Yin:2010}. 

Finally we prove that $\lim_{n \to \infty} \xi_n = \xi$. We obtain
\begin{eqnarray*}
t(i_n/n) & = & t \biggl( 
\varrho_{sc}((-\infty,y]) - \frac 1n \, \xi \, a_n \, \sqrt{\frac{1}{2 \pi^2} \log n} \biggr) \\
& = & t \bigl( \varrho_{sc}((-\infty,y]) \bigr) - \frac 1n \, \xi \, a_n \, \sqrt{\frac{1}{2 \pi^2} \log n} \, t' \bigl( \varrho_{sc}((-\infty,y]) \bigr) + o \bigl( \frac{a_n \sqrt{\log n}}{n} \bigr) \\
& = & y - \xi \frac 1n a_n \sqrt{\log n} \frac{\sqrt{2}}{\sqrt{4-y^2}} + o \bigl( \frac{a_n \sqrt{\log n}}{n} \bigr).
\end{eqnarray*}
Hence $\frac{y - t(i_n/n)}{ a_n \frac{\sqrt{\ log n}}{n}} = \xi \frac{\sqrt{2}}{\sqrt{4-y^2}} + o(1)$ and with $\lim_{n \to \infty} \sqrt{\frac{4- t(i_n/n)^2}{2}} = 
\sqrt{\frac{4- y^2}{2}}$ it follows that $\lim_{n \to \infty} \xi_n = \xi$.
\end{proof}

\section{Symmetric Wigner matrices and the GSE}

In this section, we indicate how the preceding results for Hermitian Wigner matrices can be stated and proved for real Wigner symmetric
matrices. Moreover we consider a Gaussian symplectic ensemble
(GSE). Real Wigner matrices are random symmetric matrices $M_n$ of size $n$ such that, for $i<j$, $(M_n)_{ij}$ are i.i.d. with mean zero and variance one, $(M_n)_{ii}$
are i.i.d. with mean zero and variance 2. As already mentioned, the case where the entries are Gaussian is the GOE. As in Section 1 and 2, the main
issue is to establish our conclusions for the GOE. On the level of CLT, this was developed in \cite{Rourke:2010} by means of the famous {\it interlacing formulas}
due to Forrester and Rains, \cite{Forrester/Rains:2001}, that relates the eigenvalues of different matrix ensembles.
The following relation holds between matrix ensembles:
\begin{equation} \label{forrai}
{\rm GUE}_n = {\rm even} \bigl( \rm{GOE}_n \cup {\rm GOE}_{n+1} \bigr).
\end{equation} 

The statement is: Take two independent (!) matrices from the GOE: one of size $n \times n$ and one of size $(n+1) \times (n+1)$. Superimpose
the $2n+1$ eigenvalues on the real line and then take the $n$ even ones. They have the same distribution as the eigenvalues of a $n \times n$
matrix from the GUE. If $M_n^{\R}$ denotes a GOE matrix and $W_n^{\R} := \frac{1}{\sqrt{n}} M_n^{\R}$, 
first we will prove a MDP for 
\begin{equation} \label{GOEZn}
Z_n^{\R} := \frac{N_{I_n}(W_n^{\R}) - \E[N_{I_n}(W_n^{\R})]}{a_n \sqrt{\V(N_{I_n}(W_n^{\R}))}}
\end{equation}
for any $1 \ll a_n \ll \sqrt{\V(N_{I_n}(W_n^{\R}))}$, $I_n$ an interval in ${\mathbb R}$, with speed $a_n^2$ and rate $x^2/2$. 
Let within this section $M_n^{\C}$ denote a GUE matrix and $W_n^{\C}$ the corresponding normalized matrix. 
The nice consequences of \eqref{forrai} were already suitably developed in \cite{Rourke:2010}: applying Cauchy's interlacing theorem one can write
\begin{equation} \label{interl}
N_{I_n}(W_n^{\C}) = \frac 12 \bigl[  N_{I_n}(W_n^{\R}) +  N_{I_n}'(W_n^{\R}) + \eta_n'(I_n) \bigr],
\end{equation}
where one obtains ${\rm GOE}_n'$ in $N_{I_n}'(W_n^{\R})$ from ${\rm GOE}_{n+1}$ by considering the principle sub-matrix of ${\rm GOE}_{n+1}$
and $\eta_n'(I_n)$ takes values in $\{-2,-1,0,1,2\}$. Note that  $N_{I_n}(W_n^{\R})$ and $N_{I_n}'(W_n^{\R})$ are independent because ${\rm GOE}_{n+1}$\
and ${\rm GOE}_{n}$ denote independent matrices from the GOE. We obtain
\begin{eqnarray} \label{fein}
Z_n^{\C} := \frac{N_{I_n}(W_n^{\C}) - \E[N_{I_n}(W_n^{\C})]}{a_n \sqrt{\V(N_{I_n}(W_n^{\C}))}} & = & 
\frac{N_{I_n}(W_n^{\R}) - \E[N_{I_n}(W_n^{\R})]}{a_n 2 \sqrt{\V(N_{I_n}(W_n^{\C}))}} +  
\frac{N_{I_n}'(W_n^{\R}) - \E[N_{I_n}'(W_n^{\R})]}{a_n 2 \sqrt{\V(N_{I_n}(W_n^{\C}))}}  \nonumber \\
& + & \frac{\eta_n'(I_n) - \E[\eta_n'(I_n)]}{a_n 2  \sqrt{\V(N_{I_n}(W_n^{\C}))}} =: X_n +Y_n + \varepsilon_n 
\end{eqnarray}
Now we can make use of the MDP for $(Z_n^{\C})_n$, Theorem \ref{GUE-MDP1}.
Using the independence of $X_n$ and $Y_n$ in \eqref{fein},
as well as the fact that the third summand can be estimated by
$
|\varepsilon_n| \leq \frac{2}{a_n   \sqrt{\V(N_{I_n}(W_n^{\C}))}},
$
we obtain for every $\theta$
$$
-\frac{|2\theta|}{a_n \sqrt{\V(N_{I_n}(W_n^{\C}))}} + 2 \frac{1}{a_n^2} \log \E e^{\theta a_n^2 X_n} \leq \frac{1}{a_n^2} \log \E e^{\theta a_n^2 Z_n} 
\leq \frac{|2\theta|}{a_n \sqrt{\V(N_{I_n}(W_n^{\C}))}} + 2 \frac{1}{a_n^2} \log \E e^{\theta a_n^2 X_n}.
$$
Applying the Theorem of G\"artner-Ellis, \cite[Theorem 2.3.6]{Dembo/Zeitouni:LargeDeviations},
the MDP for $(X_n)_n$ with speed $a_n^2$ and rate $x^2$ follows for all $(I_n)_n$ with $\V(N_{I_n}(W_n^{\C})) \to \infty$. 
Hence we have proved the MDP-version of \cite[Lemma 2]{Rourke:2010}, 
that $\bigl( \frac{N_{I_n}(W_n^{\R}) - \E[N_{I_n}(W_n^{\R})]}{a_n \sqrt{2} \sqrt{\V(N_{I_n}(W_n^{\C}))}}\bigr)_n$ 
satisfies an MDP with rate $x^2/2$ if $\V(N_{I_n}(W_n^{\C})) \to \infty$.
The interlacing formula \eqref{interl} leads to $2 \V(N_{I_n}(W_n^{\C})) +O(1) = \V(N_{I_n}(W_n^{\R}))$ if $\V(N_{I_n}(W_n^{\C})) \to \infty$. Therefore
$(Z_n^{\R})_n$ satisfies the MDP with speed $a_n^2$ and rate function $x^2/2$.
The proof of Theorem \ref{MDP-GUE-Gust} can simply be adapted to the GOE.
Since the Four Moment Theorem also applies for real symmetric matrices, with an analog of \cite[Lemma 5]{Dallaporta/Vu:2011} in hand we obtain:


\begin{theorem} \label{uni2}
Consider a real symmetric Wigner matrix $W_n = \frac{1}{\sqrt{n}} M_n$ whose entries satisfy tail-condition $(T)$ and match the corresponding entries
of GOE up to order 4. 

{\bf(1)} Consider $i=i(n)$ such that $i/n \to a \in (0,1)$ as $n \to \infty$. Denote the eigenvalue number $i$ of $W_n$ by $\lambda_i$.
Let $(a_n)_n$ be a sequence of real numbers such that $1 \ll a_n \ll \sqrt{\log n}$.
Then the sequence $(X_n)_n$ with $X_n :=\frac{\lambda_i -t(i/n)}{a_n \frac{\sqrt{\log n}}{n} \frac{2}{\sqrt{4 - t(i/n)^2}}}$
universally satisfies a MDP with speed $a_n^2$ and rate function $I(x)=\frac{x^2}{2}$.
{\bf(2)} For any $y \in (-2,2)$ and $I(y)=[y, \infty)$, the rescaled eigenvalue counting function $N_{I(y)}(W_n)$ universally satisfies the MDP
as in Theorem \ref{main}.
\end{theorem}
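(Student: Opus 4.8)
The plan is to rerun the program of Sections 2 and 3 with GOE replacing GUE, with the real-symmetric Four Moment Theorem replacing \cite[Theorem 15]{Tao/Vu:2009}, and with the interlacing identity \eqref{interl} as the only genuinely new ingredient. From \eqref{interl} we have already derived $\V(N_{I_n}(W_n^{\R})) = 2\,\V(N_{I_n}(W_n^{\C})) + O(1) = \bigl(\frac{1}{\pi^2}+o(1)\bigr)\log n$; together with the corresponding statement for the expectation this is the GOE analogue of the counting-function asymptotics \eqref{asymp}, i.e.\ of \cite[Lemma 2.1--2.3]{Gustavsson:2005}, with $\frac{1}{\pi^2}$ in place of $\frac{1}{2\pi^2}$. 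The rescaling factor $\frac{2}{\sqrt{4-t(i/n)^2}}$ in the definition of $X_n$ is exactly the one that makes the GOE bulk eigenvalue $\lambda_i$ asymptotically standard, its standard deviation being $\frac{\sqrt{\log n}}{n}\frac{2}{\sqrt{4-t(i/n)^2}}$, twice the $\frac{\sqrt2}{\sqrt{4-t(i/n)^2}}$ of the GUE.

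For part (1) I would first check that the proof of Theorem \ref{MDP-GUE-Gust} adapts verbatim to the GOE: with $I_n := \bigl[\,t(i/n) + \xi\, a_n \frac{\sqrt{\log n}}{n}\frac{2}{\sqrt{4-t(i/n)^2}},\,\infty\bigr)$, relation \eqref{relation} identifies the event $\{\lambda_i(W_n^{\R}) \le \xi\, a_n \frac{\sqrt{\log n}}{n}\frac{2}{\sqrt{4-t(i/n)^2}} + t(i/n)\}$ with $\{N_{I_n}(W_n^{\R})\le n-i\}$, and inserting the GOE counting-function asymptotics shows that the rescaled threshold $\frac{n-i-\E[N_{I_n}(W_n^{\R})]}{a_n(\V(N_{I_n}(W_n^{\R})))^{1/2}}$ equals $\xi+o(1)$; the GOE MDP for the counting function established above then gives the limit $-\xi^2/2$ for the left-tail probability at every $\xi<0$ and, symmetrically, for the right-tail probability at every $\xi>0$, and the topological argument via \cite[Theorem 4.1.11]{Dembo/Zeitouni:LargeDeviations} together with exponential tightness yields the MDP with rate $x^2/2$ for the GOE bulk eigenvalue. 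Then I would copy the proof of Theorem \ref{uni1} word for word, with the real-symmetric Four Moment Theorem (off-diagonal entries matched to order $4$, diagonal entries to order $2$) in place of \cite[Theorem 15]{Tao/Vu:2009}: a smooth bump function $G$ with $|\nabla^j G|\le n^{c_0}$, $0\le j\le5$, equal to one on a slightly shrunk interval and vanishing outside a slightly enlarged one produces the sandwich \eqref{inequ} with the $n^{-c_0/10}$-perturbed intervals $I_\pm$; rescaling with the same $b_n, c_n$, and using that the perturbation $\eta(n)\to0$ while $\log n/a_n^2\to\infty$ makes the $n^{-c_0}$ errors negligible on the scale $a_n^2$, transfers the MDP to $(X_n)_n$. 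This proves part (1).

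For part (2) I would imitate the proof of Theorem \ref{main}: for $\xi\in\R$, $P_n(\hat Z_n\le\xi) = P_n(N_{I(y)}(W_n)\le n-i_n) = P_n(\lambda_{i_n}\le y)$ with $i_n := n\varrho_{sc}((-\infty,y]) - \xi\, a_n\sqrt{\frac{1}{\pi^2}\log n}$; rewriting the right-hand side as a bulk-eigenvalue event and expanding $t(\cdot)$ to first order at $\varrho_{sc}((-\infty,y])$ shows that the rescaled threshold $\xi_n\to\xi$, so part (1) gives $\lim_{n\to\infty}\frac{1}{a_n^2}\log P_n(\hat Z_n\le\xi)=-\xi^2/2$ for $\xi<0$ and the symmetric statement for $\xi>0$, and the topological argument yields the MDP with numerics (with the GOE constant $\frac{1}{\pi^2}$). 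The MDP without numerics then follows exactly as in the proof of Theorem \ref{GUE-MDP1}, via exponential equivalence of the two normalisations, once one has $\E[Y_n] = n\varrho_{sc}(I(y)) + o(1)$ and $\V(Y_n) = \bigl(\frac{1}{\pi^2}+o(1)\bigr)\log n$ for real symmetric Wigner matrices matching GOE to order $4$ — the real-symmetric analogue of \cite[Theorem 2]{Dallaporta/Vu:2011}, which rests on the localization estimates of \cite{Erdoes/Yau/Yin:2010} and on the GOE variance coming from \eqref{interl}.

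Everything above is a mechanical transcription of Sections 2 and 3; the only points requiring genuine verification are that \eqref{interl} indeed delivers the GOE analogue of Gustavsson's Lemmas 2.1--2.3 with the correct constants ($\frac{1}{\pi^2}$ and the factor $2$ rather than $\frac{1}{2\pi^2}$ and $\sqrt2$), and that the real-symmetric Four Moment Theorem together with the corresponding Dallaporta--Vu expectation/variance estimate hold under the stated moment-matching hypotheses. Both are available in the literature, and the one step that needs care is keeping track of the factors of $2$ as they propagate through the interlacing identity.
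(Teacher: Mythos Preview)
Your proposal is correct and follows precisely the route the paper indicates: the paper's own ``proof'' of Theorem~\ref{uni2} is a single sentence stating that the Four Moment Theorem also applies in the real symmetric case and that, with an analogue of \cite[Lemma~5]{Dallaporta/Vu:2011} in hand, the result follows; the preceding paragraphs already establish the GOE MDP for the counting function via the interlacing identity \eqref{interl} and remark that the proof of Theorem~\ref{MDP-GUE-Gust} ``can simply be adapted to the GOE''. Your write-up is exactly this program spelled out in detail, with the correct tracking of the factor-of-two changes ($\tfrac{1}{\pi^2}$ in place of $\tfrac{1}{2\pi^2}$, and $\tfrac{2}{\sqrt{4-t(i/n)^2}}$ in place of $\tfrac{\sqrt2}{\sqrt{4-t(i/n)^2}}$).
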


Finally we consider the Gaussian Symplectic Ensemble (GSE). 
Here the following relation holds between matrix ensembles:
$ 
{\rm GSE}_n = {\rm even} \bigl({\rm GOE}_{2n+1} \bigr) \frac{1}{\sqrt{2}}.
$ 
The multiplication by $\frac{1}{\sqrt{2}}$ denotes scaling the $(2n+1) \times (2n+1)$ GOE matrix by the factor $\frac{1}{\sqrt{2}}$.
Let $x_1 < x_2 < \cdots < x_n$ denote the ordered eigenvalues of an $n \times n$ matrix from the GSE and let $y_1 <y_2 < \cdots < y_{2n+1}$ denote
the ordered eigenvalues of an $(2n+1) \times (2n+1)$ matrix from the GOE. Then it follows that $x_i = y_{2i}/\sqrt{2}$ in distribution. Hence
the MDP for the $i$-th eigenvalue of the GSE follows directly from the GOE case.

\begin{theorem} \label{MDP-GSE-Gust}
Consider the GSE matrix $W_n^{{\mathbb H}} = \frac{1}{\sqrt{n}} M_n^{{\mathbb H}}$. Consider $i=i(n)$ such that $i/n \to a \in (0,1)$ as $n \to \infty$. 
If $\lambda_i$ denotes the eigenvalue number $i$ in the GSE matrix, it holds that for any sequence $(a_n)_n$ of real numbers such that
$1 \ll a_n \ll \sqrt{\log n}$ the sequence 
$$
\sqrt{4 - t(i/n)^2} \frac{\lambda_i(W_n^{{\mathbb H}}) - t(i/n)}{a_n \, \frac{\sqrt{\log n}}{n}}
$$
satisfies a MDP with speed $a_n^2$ and rate function $I(x)=\frac{x^2}{2}$.
For any $y \in (-2,2)$ and $I(y)=[y, \infty)$, the rescaled eigenvalue counting function $N_{I(y)}(W_n^{{\mathbb H}})$ satisfies the MDP
as in Theorem \ref{GUE-MDP1}.
\end{theorem}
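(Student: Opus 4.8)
The plan is to deduce both assertions from the corresponding GOE statements through the interlacing identity ${\rm GSE}_n = {\rm even}({\rm GOE}_{2n+1})/\sqrt2$, exactly as the GOE results of Section 4 were obtained from the GUE ones via \eqref{forrai}. Set $m := 2n+1$ and let $W_m^{\R} = \frac{1}{\sqrt m} M_m^{\R}$ be the normalised ${\rm GOE}_m$ matrix whose even-indexed eigenvalues, after rescaling, realise the GSE spectrum; the identity $x_i = y_{2i}/\sqrt2$ then reads $\lambda_i(W_n^{\mathbb H}) \overset{d}{=} \sqrt{\tfrac{2n+1}{2n}}\,\lambda_{2i}(W_m^{\R})$, with prefactor $1 + O(1/n)$. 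Note that $2i/m \to a \in (0,1)$ and $\log m \sim \log n$, so every GOE result available for bulk index $i(n)/n\to a$ and speed $1\ll a_n\ll\sqrt{\log n}$ applies to index $2i$ in dimension $m$ with the same speed sequence.

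For the eigenvalue MDP I would apply the GOE analogue of Theorem \ref{MDP-GUE-Gust} (the adaptation recorded after \eqref{interl}) to eigenvalue number $2i$ of the $m$-dimensional GOE, so that $V_n := (\lambda_{2i}(W_m^{\R}) - t(2i/m)) / (a_n \tfrac{\sqrt{\log m}}{m} \tfrac{2}{\sqrt{4 - t(2i/m)^2}})$ obeys the MDP with speed $a_n^2$ and rate $x^2/2$. Substituting the distributional identity above into the definition of $X_n := \sqrt{4 - t(i/n)^2}\,(\lambda_i(W_n^{\mathbb H}) - t(i/n)) / (a_n \tfrac{\sqrt{\log n}}{n})$ expresses $X_n \overset{d}{=} c_n V_n + d_n$ with deterministic $c_n, d_n$; the elementary estimates $\sqrt{\tfrac{2n+1}{2n}} = 1 + O(1/n)$, $t(2i/m) = t(i/n) + O(1/n)$ (local Lipschitz continuity of $t$ on compact subsets of $(0,1)$, together with $O(1/n) = o(a_n\tfrac{\sqrt{\log n}}{n})$ as $a_n \to \infty$), $m/n \to 2$ and $\sqrt{\log m}/\sqrt{\log n} \to 1$ yield $c_n \to 1$ and $d_n \to 0$. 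Hence $(X_n)_n$ and $(V_n)_n$ are exponentially equivalent and the MDP for $(X_n)_n$ follows from \cite[Theorem 4.2.13]{Dembo/Zeitouni:LargeDeviations}. (One could equally run the argument of the proof of Theorem \ref{MDP-GUE-Gust} through \eqref{relation}, obtaining $\lim_n a_n^{-2} \log P(X_n \le \xi) = -\xi^2/2$ for $\xi < 0$ and the mirror statement for $\xi > 0$, then concluding by the topological and exponential-tightness argument given there.)

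For the counting function the interlacing identity gives, for $I = [y,\infty)$, $N_I(W_n^{\mathbb H}) = \tfrac12 N_{J_n}(W_m^{\R}) + \varepsilon_n$ with $J_n := [\sqrt{\tfrac{2n}{2n+1}}\,y, \infty)$ and $\varepsilon_n \in \{0,-\tfrac12\}$ (it records the parity of the number of eigenvalues of $W_m^{\R}$ to the left of $J_n$). Since the endpoint of $J_n$ tends to $y \in (-2,2)$, the GOE variance asymptotics give $\V(N_{J_n}(W_m^{\R})) = (\tfrac{1}{\pi^2} + o(1)) \log m \to \infty$, whence $\V(N_I(W_n^{\mathbb H})) = \tfrac14 \V(N_{J_n}(W_m^{\R})) + O(\V(N_{J_n}(W_m^{\R}))^{1/2}) = (\tfrac{1}{4\pi^2} + o(1)) \log n$ and $2\sqrt{\V(N_I(W_n^{\mathbb H}))} = \sqrt{\V(N_{J_n}(W_m^{\R}))}\,(1 + o(1))$. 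Writing $Z_n^{\mathbb H}$ for the standardised counting function of Theorem \ref{GUE-MDP1} one then has $Z_n^{\mathbb H} = (1 + o(1))\,\tfrac{N_{J_n}(W_m^{\R}) - \E[N_{J_n}(W_m^{\R})]}{a_n \sqrt{\V(N_{J_n}(W_m^{\R}))}} + \tfrac{\varepsilon_n - \E[\varepsilon_n]}{a_n \sqrt{\V(N_I(W_n^{\mathbb H}))}}$, where the last summand is bounded deterministically by $O(1/(a_n\sqrt{\log n}))$ and hence exponentially negligible. By the GOE counting-function MDP established in Section 4 (applied in dimension $m$ to $J_n$; the hypothesis $1 \ll a_n \ll \sqrt{\V(N_{J_n}(W_m^{\R}))}$ holds because $\sqrt{\V(N_{J_n}(W_m^{\R}))}$ differs from $\sqrt{\V(N_I(W_n^{\mathbb H}))}$ only by a bounded factor), the first summand is exponentially equivalent to a sequence satisfying the MDP with speed $a_n^2$ and rate $x^2/2$; so is $(Z_n^{\mathbb H})_n$, by \cite[Theorem 4.2.13]{Dembo/Zeitouni:LargeDeviations}. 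The MDP with numerics follows similarly, using $\E[N_I(W_n^{\mathbb H})] = n\varrho_{sc}(I) + O(1)$ (again from the identity above and the GOE mean asymptotics), which makes the numerics-centred version exponentially equivalent to $(Z_n^{\mathbb H})_n$.

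The only genuine difficulty I anticipate is the constant-tracking across the two simultaneous transformations — multiplication of the GOE spectrum by $1/\sqrt2$ and passage from dimension $2n+1$ to $n$: one must verify that the correct normalising factor for the eigenvalue MDP is indeed $\sqrt{4 - t(i/n)^2}$ (not the GOE factor $2/\sqrt{4-t(i/n)^2}$) and that the counting-function variance constant is $\tfrac{1}{4\pi^2}$, i.e. respectively one half and one quarter of the GUE values. Beyond this bookkeeping the argument reduces verbatim to results already proved for the GOE.
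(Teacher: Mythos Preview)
Your proposal is correct and follows the same route as the paper: both reduce the GSE statements to the GOE ones via the Forrester--Rains identity ${\rm GSE}_n = {\rm even}({\rm GOE}_{2n+1})/\sqrt{2}$, i.e.\ $x_i \overset{d}{=} y_{2i}/\sqrt{2}$. The paper's treatment is a one-line remark (``the MDP for the $i$-th eigenvalue of the GSE follows directly from the GOE case''), while you have supplied the bookkeeping---the passage from dimension $2n+1$ to $n$, the factor $\sqrt{(2n+1)/(2n)}\to 1$, the shift $t(2i/m)-t(i/n)=O(1/n)$, and the resulting exponential equivalence---that confirms the normalising constant $\sqrt{4-t(i/n)^2}$ and the variance constant $1/(4\pi^2)$ are the right ones.
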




\section{Moderate deviations for covariance matrices}
In this section we briefly present the analogous results for covariance matrices. They rely on \cite{Su:2006}, where Gaussian fluctuations of individual
eigenvalues in complex sample covariance matrices were considered, as well as on the Four Moment Theorem for random covariance matrices due
to Tao and Vu (\cite{Tao/Vu:2010b}). Let $p=p(n)$ and $n$ be integers such that $p \geq n$ and $\lim_{n \to \infty} \frac pn = \gamma \in[1, \infty)$.
Let $X$ be a random $p \times n$ matrix with complex entries $X_{ij}$ such that they are identically distributed and independent, have mean zero and variance 1.
Assume moreover that the following condition is fulfilled:

\noindent
{\bf Moment-condition $(M)$}:
$X$ satisfies moment-condition $(M)$, if there exists $C_0 \geq 2$ and $C>0$ such that $\sup_{ij} \E[|X_{ij}|^{C_0}] \leq C$. 

Then $W:= W_{p,n} := \frac 1n X^* X$ is called {\it covariance matrix}. Hence it has at most $p$ non zero eigenvalues, which are real and nonnegative, denoted
by $0 \leq \lambda_1(W) \leq \cdots \leq \lambda_p(W)$. We abbreviate $\lambda_i(W)$ as $\lambda_i$.
If the entries are Gaussian random variables, $W_{p,n}$ is called Laguerre Unitary Ensemble (LUE) or complex 
Gaussian Wishart ensemble. LUE matrices will be denoted by $W'=W_{p,n}'$.
In this case, the eigenvalues form a determinantal point process with admissible, good kernel given in terms of Laguerre polynomials; see
for example \cite{Su:2006}. With Theorem \ref{mdpDDP} we obtain a MDP for the counting function $N_{I_n}(W_{p,n}')$ for intervals $I_n$ with $\V(N_{I_n}(W_{p,n}')) \to \infty$
for $n \to \infty$. The classical Marchenko-Pastur theorem states that as $n \to \infty$ such that $\frac pn \to \gamma \geq 1$, almost surely 
$\frac 1n \sum_{i=1}^n \delta_{\lambda_i} \to \mu_{\gamma}$ in distribution, where $\mu_{\gamma}$ is the Marchenko-Pastur law with density 
$d \mu_{\gamma}(x) = \frac{1}{2 \pi x} \sqrt{(x- \alpha)(\beta-x)} 1_{[\alpha, \beta]}(x) \, dx$, where $\alpha = (\sqrt{\gamma}-1)^2$ and $\beta=(\sqrt{\gamma}+1)^2$.

Let 
$$
\alpha_{p,n} := \biggl( \sqrt{\frac pn} -1 \biggr)^2, \,\, \beta_{p,n} := \biggl( \sqrt{\frac pn} +1 \biggr)^2
$$
and $I_n := [t_n, \infty)$ with $t_n \leq \beta_{n,p} - \delta$ for some $\delta >0$. Then with \cite[Lemma 3]{Su:2006}, the variance of the number
of eigenvalues of $W_{p,n}'$ in $I_n$ satisfies
\begin{equation} \label{suvar}
\V(N_{I_n}(W_{p,n}')) = \frac{1}{2 \pi^2} \log n (1 +o(1)).
\end{equation}
Moreover with \cite[Lemma 1]{Su:2006}, the expected number of eigenvalues of $W_{p,n}'$ in $I_n=[t_n, \infty)$ 
with $t_n \to t \in (\alpha, \beta)$ satisfies
\begin{equation} \label{suvar2}
\E[N_{I_n}(W_{p,n}')] = n \int_{t_n}^{\beta_{p,n}} \mu_{p,n}(x) \, dx \, (1 +o(1)),
\end{equation}
where $\mu_{p,n}(x) := \frac{1}{2\pi x} \sqrt{(x- \alpha_{p,n})(\beta_{p,n}-x)} 1_{[\alpha_{p,n}, \beta_{p,n}]}(x)$. The proof of Theorem \ref{mdpDDP}
leads to an MDP (with numerics) for
$$
\biggl( \frac{N_{I_n}(W_{p,n}') -  n \int_{t_n}^{\beta_{p,n}} \mu_{p,n}(x) \, dx}{a_n\sqrt{
 \frac{1}{2 \pi^2} \log n}} \biggr)_n
$$
for every $I_n=[t_n, \infty)$ with $t_n \to t \in (\alpha, \beta)$ and all $1 \ll a_n \ll \sqrt{\log n}$. Let
$$
G(t) := \int_{\alpha_{p,n}}^t \mu_{p,n}(x) \, dx \,\, \text{for} \,\,  \alpha_{p,n} \leq t \leq \beta_{p,n}.
$$
Arguing as in Section 2, the following MDP can be achieved:

\begin{theorem}
Consider the LUE matrix $W_{p,n}'$. Let $t:=t_{n,i} = G^{-1}(i/n)$ with
$i=i(n)$ such that $\frac in \to a \in (0,1)$ as $n \to \infty$. Then as $\frac pn \to \gamma \geq 1$
the sequence $(X_n)_n$ with
\begin{equation} \label{baldfertig}
X_n := \frac{\sqrt{2} \pi \mu_{p,n}(t) (\lambda_i -t)}{a_n \frac{\sqrt{ \log n}}{n}}
\end{equation}
satisfies an MDP for any $1 \ll a_n \ll \sqrt{\log n}$ with rate $x^2/2$.
\end{theorem}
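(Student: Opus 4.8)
The plan is to transcribe the proof of Theorem~\ref{MDP-GUE-Gust} to the Laguerre ensemble, using the MDP for counting functions of determinantal processes (Theorem~\ref{mdpDDP}) in place of Theorem~\ref{GUE-MDP1}, and Su's asymptotics \eqref{suvar}, \eqref{suvar2} in place of \eqref{asymp}. Fix $\xi\in\R$ and put
$$
I_n:=\Bigl[\,t+\xi\,a_n\,\frac{\sqrt{\log n}}{n}\,\frac{1}{\sqrt{2}\,\pi\,\mu_{p,n}(t)}\,,\ \infty\Bigr)=:[t_n,\infty).
$$
Since $a_n\ll\sqrt{\log n}$ one has $t_n\to t=G^{-1}(a)\in(\alpha,\beta)$, hence $t_n\le\beta_{p,n}-\delta$ for $n$ large and both \eqref{suvar} and \eqref{suvar2} apply to $N_{I_n}(W_{p,n}')$. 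The LUE eigenvalues form a determinantal point process with admissible good Laguerre kernel, so $N_{I_n}(W_{p,n}')$ has the law of a sum of independent Bernoulli variables, and the elementary equivalence \eqref{relation} adapted to covariance matrices gives $N_{I_n}(W_{p,n}')\le n-i$ if and only if $\lambda_i(W_{p,n}')\le t_n$ (the two endpoint conventions agree almost surely). Therefore
$$
P_n\bigl(X_n\le\xi\bigr)=P_n\bigl(N_{I_n}(W_{p,n}')\le n-i\bigr)
=P_n\Bigl(\tfrac{N_{I_n}(W_{p,n}')-\E[N_{I_n}(W_{p,n}')]}{a_n\sqrt{\V(N_{I_n}(W_{p,n}'))}}\le\tfrac{n-i-\E[N_{I_n}(W_{p,n}')]}{a_n\sqrt{\V(N_{I_n}(W_{p,n}'))}}\Bigr).
$$

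Next I would evaluate the right-hand threshold. With $b(n):=a_n\frac{\sqrt{\log n}}{n}\frac{1}{\sqrt2\,\pi\,\mu_{p,n}(t)}$, and using $G(t)=i/n$, $G'(t)=\mu_{p,n}(t)$, a Taylor expansion yields $G(t_n)=\frac in+\xi\,b(n)\,\mu_{p,n}(t)+O(b(n)^2)$, so that $n\int_{t_n}^{\beta_{p,n}}\mu_{p,n}(x)\,dx=n\bigl(1-G(t_n)\bigr)=n-i-\xi\,a_n\sqrt{\log n}/(\sqrt2\,\pi)+O\bigl(a_n^2\log n/n\bigr)$, where I used $n\,b(n)\,\mu_{p,n}(t)=a_n\sqrt{\log n}/(\sqrt2\,\pi)$. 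Combining this with \eqref{suvar2} and \eqref{suvar} I obtain $\E[N_{I_n}(W_{p,n}')]=n-i-\xi\,a_n\sqrt{\log n}/(\sqrt2\,\pi)+o(a_n\sqrt{\log n})$ and $\V(N_{I_n}(W_{p,n}'))=(\tfrac1{2\pi^2}+o(1))\log n$, hence the threshold equals $\xi+o(1)$. Since $\V(N_{I_n}(W_{p,n}'))\to\infty$, the regime $1\ll a_n\ll\sqrt{\V(N_{I_n}(W_{p,n}'))}$ holds, so Theorem~\ref{mdpDDP} gives $\lim_n a_n^{-2}\log P_n(X_n\le\xi)=-\xi^2/2$ for every $\xi<0$. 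Running the same computation for $P_n(X_n\ge\xi)=P_n(N_{I_n}(W_{p,n}')\ge n-i+1)$ gives $\lim_n a_n^{-2}\log P_n(X_n\ge\xi)=-\xi^2/2$ for every $\xi>0$.

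Finally I would conclude exactly as at the end of the proof of Theorem~\ref{MDP-GUE-Gust}: the two one-sided limits determine $\lim_n a_n^{-2}\log P_n(X_n\in U)$ for every bounded open interval $U$ whose endpoints avoid $0$, these intervals form a base of the topology, so \cite[Theorem~4.1.11]{Dembo/Zeitouni:LargeDeviations} yields a weak MDP with speed $a_n^2$ and rate $x^2/2$, and the upper-tail estimate gives exponential tightness, which upgrades this to the full MDP by \cite[Lemma~1.2.18]{Dembo/Zeitouni:LargeDeviations}. The main obstacle is quantitative: the deviation limit lands exactly at $\xi$ only if the error term in $\E[N_{I_n}(W_{p,n}')]$ is $o(a_n\sqrt{\log n})$ uniformly for $t_n$ in a neighbourhood of $t$, which must be extracted from the proofs of \cite[Lemma~1, Lemma~3]{Su:2006} rather than from the multiplicative $(1+o(1))$ form quoted above; once that is in hand the remainder is a routine transcription of the GUE argument, with the LUE playing the role of the GUE and the Marchenko--Pastur density $\mu_{p,n}$ the role of the semicircle density.
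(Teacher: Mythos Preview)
Your proposal is correct and follows essentially the same route as the paper's proof: choose the interval $I_n=[t+\xi b(n),\infty)$ with $b(n)=a_n\frac{\sqrt{\log n}}{n}\frac{1}{\sqrt{2}\pi\mu_{p,n}(t)}$, use the equivalence \eqref{relation}, Taylor-expand $G$ at $t$ to identify the threshold as $\xi+o(1)$, and then repeat the topological argument from the end of the proof of Theorem~\ref{MDP-GUE-Gust}. Your final caveat about the error in $\E[N_{I_n}(W_{p,n}')]$ is in fact more scrupulous than the paper, which writes $n\int_{t+\xi b_n}^{\beta_{p,n}}\mu_{p,n}(x)\,dx\,(1+o(1))=n-i-\xi a_n\sqrt{\log n}/(\sqrt{2}\pi)+o(1)$ without commenting on why the multiplicative $(1+o(1))$ from \eqref{suvar2} yields an additive remainder small compared to $a_n\sqrt{\log n}$; as you note, this requires going back to the proof of \cite[Lemma~1]{Su:2006} for a sharper additive bound analogous to \eqref{asymp}.
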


\begin{proof}
Along the lines of the proof of Theorem \ref{MDP-GUE-Gust}, the main step is to calculate $\E \bigl[ N_{I_n}(W_{p,n}') \bigr]$ for
$I_n = [t + \xi b_n, \infty)$ with $b_n = a_n \frac{\sqrt{\log n}}{n} \frac{1}{\pi \sqrt{2} \mu_{p,n}(t)}$. Using the Taylor expansion
for $\int_{t + \xi b_n}^{\cdot} \mu_{p,n}(x) \, dx$ we obtain
$$
\E \bigl[ N_{I_n}(W_{p,n}') \bigr] = n \int_{t +\xi b_n}^{\beta_{p,n}} \mu_{p,n}(x) \, dx \, (1 +o(1)) = n - i - \xi a_n \sqrt{ \log n} \frac{1}{\sqrt{2} \pi} + o(1).
$$
The statement follows step by step along the proof of Theorem \ref{MDP-GUE-Gust}.
\end{proof}

As was done for Wigner matrices, one can extend the last Theorem to general covariance matrices $W_{p,n}$ whose entries satisfy the moment-condition $(M)$
and match the corresponding entries of LUE up to order 4. Namely, Tao and Vu extended their Four Moment Theorem to the case of covariance matrices
in \cite[Theorem 6]{Tao/Vu:2010b}. We apply it in the same way as for Wigner matrices.
Finally we end up with the following universality result:

\begin{theorem}
Let $W_{p,n}$ be a covariance matrix whose entries satisfy moment-condition $(M)$ and match the corresponding entries of LUE up to order 4.
Then, for any $I_n = [t_n, \infty)$ with $t_n \to t \in (\alpha, \beta)$  and all $1 \ll a_n \ll \sqrt{\log n}$
the sequence $(\hat{Z}_n)_n$ with
$$
\hat{Z}_n = \frac{N_{I_n}(W_{p,n}) -  n \int_{t_n}^{\beta_{p,n}} \mu_{p,n}(x) \, dx}{a_n\sqrt{
 \frac{1}{2 \pi^2} \log n}}
$$
satisfies a MDP (with numerics) with speed $a_n^2$ and rate function $I(x)=\frac{x^2}{2}$.
\end{theorem}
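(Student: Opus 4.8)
The idea is to imitate the two-step strategy of Sections 2 and 3, with the Four Moment Theorem for covariance matrices of Tao and Vu, \cite[Theorem 6]{Tao/Vu:2010b}, in the role played by \cite[Theorem 15]{Tao/Vu:2009} for Wigner matrices, and the preceding theorem (the MDP for individual eigenvalues of the LUE) in the role of Theorem \ref{MDP-GUE-Gust}. The first step is to prove the analogue of Theorem \ref{uni1} in the present setting, namely a \emph{universal} version of the preceding theorem: for any index sequence $j=j(n)$ with $j/n\to a\in(0,1)$, writing $t_{n,j}:=G^{-1}(j/n)$, the sequence
$$
X_n:=\frac{\sqrt{2}\,\pi\,\mu_{p,n}(t_{n,j})\,(\lambda_j(W_{p,n})-t_{n,j})}{a_n \frac{\sqrt{\log n}}{n}}
$$
satisfies the MDP with speed $a_n^2$ and rate $x^2/2$ for every covariance matrix $W_{p,n}$ obeying moment-condition $(M)$ (in the form required by \cite[Theorem 6]{Tao/Vu:2010b}) whose entries match the corresponding entries of the LUE up to order 4. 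This is proved just as Theorem \ref{uni1}: one regularizes the indicator of an interval $[b,c]$ by smooth bump functions equal to $1$ on $[b+n^{-c_0/10},c-n^{-c_0/10}]$ and vanishing outside $[b-n^{-c_0/10},c+n^{-c_0/10}]$, with the derivative bounds demanded by \cite[Theorem 6]{Tao/Vu:2010b}, to obtain a sandwich for $P_n(\lambda_j(W_{p,n})\in[b,c])$ between the LUE probabilities of the slightly enlarged and shrunk intervals, up to an additive error $n^{-c_0}$, exactly as in \eqref{inequ}; rescaling $[b,c]$ to a window of width of order $a_n\sqrt{\log n}$ around the unnormalized classical location $n\,t_{n,j}$ and using $a_n\ll\sqrt{\log n}$ makes the endpoint perturbations negligible and sends $\frac{1}{a_n^2}\log n^{-c_0}\to-\infty$; the preceding theorem, \cite[Lemma 1.2.15]{Dembo/Zeitouni:LargeDeviations}, and the topology-on-a-basis plus exponential-tightness argument from the proof of Theorem \ref{MDP-GUE-Gust} then close this step.

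The second step passes to the counting function exactly as in the proof of Theorem \ref{main}, through the elementary equivalence $N_{[t_n,\infty)}(W_{p,n})\le n-i\iff\lambda_i(W_{p,n})\le t_n$, the covariance analogue of \eqref{relation}. For fixed $\xi\in\R$ put $i_n:=n\,G(t_n)-\xi\,a_n\sqrt{\frac{1}{2\pi^2}\log n}$ (or, to be precise, its integer part); since $\int_{\alpha_{p,n}}^{\beta_{p,n}}\mu_{p,n}(x)\,dx=1$, one has $n\int_{t_n}^{\beta_{p,n}}\mu_{p,n}(x)\,dx+\xi\,a_n\sqrt{\frac{1}{2\pi^2}\log n}=n-i_n+O(1)$, hence
$$
P_n(\hat Z_n\le\xi)=P_n(N_{I_n}(W_{p,n})\le n-i_n)=P_n(\lambda_{i_n}(W_{p,n})\le t_n).
$$
After the rescaling of the first step this last probability equals $P_n(X_n\le\xi_n)$, where $X_n$ is built from $\lambda_{i_n}$, $t_{n,i_n}=G^{-1}(i_n/n)$, and $\xi_n$ is the image of the threshold $t_n$ under that rescaling. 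Since $i_n/n\to G(t)\in(0,1)$, and $\mu_{p,n}$ stays bounded away from $0$ near $t$ (because $t\in(\alpha,\beta)$ strictly, so $G^{-1}$ is smooth there), a first-order Taylor expansion of $G^{-1}$ around $G(t_n)$ gives $t_n-t_{n,i_n}=\frac{\xi\,a_n\sqrt{\log n}}{\sqrt{2}\,\pi\,n\,\mu_{p,n}(t_n)}+o\!\left(\frac{a_n\sqrt{\log n}}{n}\right)$ and hence $\xi_n\to\xi$; the first step therefore applies with $j=i_n$ and yields $\lim_n\frac{1}{a_n^2}\log P_n(\hat Z_n\le\xi)=-\frac{\xi^2}{2}$ for all $\xi<0$. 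The identity $P_n(\hat Z_n\ge\xi)=P_n(\lambda_{i_n+1}(W_{p,n})\ge t_n)$ produces the matching limit for $\xi>0$, and the topological argument together with exponential tightness, as in the proof of Theorem \ref{MDP-GUE-Gust}, upgrades these one-sided limits to the claimed MDP. Note that, this being the MDP with numerics, no asymptotics of $\E[N_{I_n}(W_{p,n})]$ or $\V(N_{I_n}(W_{p,n}))$ for non-Gaussian $W_{p,n}$ are required --- only the LUE asymptotics \eqref{suvar}, \eqref{suvar2}, which already entered the preceding theorem.

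The main obstacle is making sure that \cite[Theorem 6]{Tao/Vu:2010b} really applies in the regime at hand: one must check that $i_n$ stays in the bulk, $\varepsilon n\le i_n\le(1-\varepsilon)n$ for $n$ large (which holds because $i_n/n\to G(t)\in(0,1)$), that moment-condition $(M)$ --- possibly strengthened as that theorem requires --- is sufficient, and that the comparison error $n^{-c_0}$ is negligible on the moderate-deviation scale $e^{-\Theta(a_n^2)}$, which is exactly where the constraint $a_n\ll\sqrt{\log n}$ enters. A secondary technical point is the integer rounding in the definition of $i_n$ together with the behaviour of $G^{-1}$ near the edge of $[\alpha_{p,n},\beta_{p,n}]$; both are handled precisely as in the proofs of Theorems \ref{main} and \ref{MDP-GUE-Gust}.
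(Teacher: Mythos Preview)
Your proposal is correct and follows essentially the same two-step strategy as the paper: first extend the MDP for individual eigenvalues from LUE to general covariance matrices via the Four Moment Theorem of \cite{Tao/Vu:2010b} (which the paper states only by analogy with the Wigner case), then translate to the counting function via the equivalence $N_{[t_n,\infty)}\le n-i\iff\lambda_i\le t_n$, the choice $i_n=nG(t_n)-\xi a_n\sqrt{\frac{1}{2\pi^2}\log n}$, and a Taylor expansion of $G^{-1}$ to show $\xi_n\to\xi$. You are in fact more explicit than the paper about the sandwich inequality, the negligibility of $n^{-c_0}$ at speed $a_n^2$, and the integer-rounding and bulk-index checks.
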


\begin{remark}
Since a version of the Erd\"os-Yau-Yin rigidity theorem for covariance matrices is not yet proved, the MDP
``without numerics'' for the eigenvalue counting function for non-Gaussian covariance matrices is not stated.
\end{remark}

\begin{proof}
For every $\xi \in \R$ we obtain with $X_n$ defined in \eqref{baldfertig} (where $\lambda_i = \lambda_i(W_{p,n})$) that
$$
P_n \bigl( \hat{Z}_n \leq \xi \bigr) = P_n \bigl( X_n \leq \xi_n \bigr)
$$
with
$$
\xi_n = \sqrt{2} \pi \mu_{p,n}(G^{-1}(i_n/n)) \frac{t_n - G^{-1}(i_n/n)}{a_n \frac{\sqrt{\log n}}{n}}
$$
and $i_n := n \int_{\alpha_{p,n}}^{t_n} \mu_{p,n}(x) \, dx - \xi a_n \sqrt{ \frac{1}{2 \pi^2} \log n}$.
Now $i_n/n \to \mu_{\gamma}(-\infty,t]) \in (0,1)$, since $t \in (\alpha, \beta)$. 
Moreover Taylor expansion leads to
$$
G^{-1}(i_n/n)= t_n - \xi \frac 1n a_n \sqrt{ \log n} \frac{1}{\sqrt{2} \pi \mu_{p,n}(t_n)} + o(1).
$$
Hence 
$$
\sqrt{2} \pi \mu_{p,n}(t_n) \frac{t_n - G^{-1}(i_n/n)}{a_n \frac{\sqrt{\log n}}{n}} = \xi +o(1)
$$
and we established the result.
\end{proof}

Real covariance matrices can be considered as well. The first step would be to establish our conclusions for the LOE, the Laguerre
Orthogonal Ensemble. This can be done applying {\it interlacing formulas}, that relates the eigenvalues of LUE and LOE matrices.
Forrester and Rains proved in \cite{Forrester/Rains:2001} the following relation:
$
{\rm LUE}_{p,n} = {\rm even} \bigl( {\rm LOE}_{p,n} \cup {\rm LOE}_{p+1,n+1} \bigr).
$
Now we can conclude to similar MDPs for counting functions of eigenvalues in LOE with respect to intervals in the bulk as well as
for individual eigenvalues in LOE in the bulk. On the basis of the Four Moment Theorem in the real case, the conclusions can be extended
to non-Gaussian real covariance matrices. We omit the details.


\providecommand{\MRhref}[2]{%
  \href{http://www.ams.org/mathscinet-getitem?mr=#1}{#2}
}
\providecommand{\href}[2]{#2}

\newcommand{\SortNoop}[1]{}\def\cprime{$'$} \def\cprime{$'$}
  \def\polhk#1{\setbox0=\hbox{#1}{\ooalign{\hidewidth
  \lower1.5ex\hbox{`}\hidewidth\crcr\unhbox0}}}
\providecommand{\bysame}{\leavevmode\hbox to3em{\hrulefill}\thinspace}
\providecommand{\MR}{\relax\ifhmode\unskip\space\fi MR }
\providecommand{\MRhref}[2]{%
  \href{http://www.ams.org/mathscinet-getitem?mr=#1}{#2}
}
\providecommand{\href}[2]{#2}

\end{document}